\title{Quantifying knowledge with a new calculus for belief functions - a generalization of probability theory}
\date{\today}
\author{Timber Kerkvliet and Ronald Meester \\ \\ VU University Amsterdam}

\documentclass[11pt,twoside,a4paper]{article}

\usepackage{amsthm}
\usepackage{amsmath}
\usepackage{amssymb}

\numberwithin{equation}{section}

\newtheorem{theorem}{Theorem}[section]
\newtheorem{lemma}[theorem]{Lemma}

\theoremstyle{definition}
\newtheorem{definition}[theorem]{Definition}

\theoremstyle{remark}
\newtheorem{example}[theorem]{Example}
\newtheorem{remark}[theorem]{Remark}

\newcommand{\B}{{\rm Bel}}
\newcommand{\E}{{\rm Exp}}

\tolerance=1000
\hyphenpenalty=5000

\begin{document}
\maketitle

\begin{abstract} 
We first show that there are practical situations in for instance forensic and gambling settings, in which applying classical probability theory, that is, based on the axioms of Kolmogorov, is problematic. We then introduce and discuss Shafer belief functions. Technically, Shafer belief functions generalize probability distributions. Philosophically, they pertain to individual or shared knowledge of facts, rather than to facts themselves, and therefore can be interpreted as generalizing epistemic probability, that is, probability theory interpreted epistemologically. Belief functions are more flexible and better suited to deal with certain types of uncertainty than classical probability distributions. We develop a new calculus for belief functions which does not use the much criticized Dempster's rule of combination, by generalizing the classical notions of conditioning and independence in a natural and uncontroversial way. Using this calculus, we explain our rejection of Dempster's rule in detail. We apply the new theory to a number of examples, including a gambling example and an example in a forensic setting. We prove a law of large numbers for belief functions and offer a betting interpretation similar to the Dutch Book Theorem for probability distributions.
\end{abstract}

\medskip\noindent
{\sc Keywords:} Belief Functions, Conditioning, Independence, Modeling ignorance, Law of large numbers, Epistemic interpretation, Gambling, Rejection of Dempster's rule, Lack of additivity. 

\section{Introduction}

In many situations, the classical Kolmogorov axioms for probability lead to a very useful theory, with many connections to other branches of mathematics and with numerous important applications. The axioms themselves can be justified in many ways, for instance via a frequentistic interpretation of probabilities. In such a frequentistic interpretation, we take relative frequencies in repeated experiments as the motivation and justification of the axioms. Other justifications for the axioms of Kolmogorov are possible as well, see e.g.\ \cite{goos} and references below. 

This is not to say, however, that the Kolmogorov axioms should be the only and exclusive way to deal with uncertainty. Especially when uncertainty is interpreted epistemologically, that is, relating to knowledge of facts rather than to facts themselves, it is not always the case that the classical axiom of additivity adequately describes the situation at hand. For instance, in a legal or forensic setting it has already been debated for several decades as to what extent the classical theory of probability and alternatives to it, are useful and/or suitable for assessing the value of evidence, see e.g.\ \cite{fried96}, \cite{dawid}, \cite{rob_vig95}, \cite{ait95}, \cite{sjerps00}. There is a number of aspects about modeling epistemic uncertainty for which the classical approach is problematic, and we start our contribution with a short discussion of these.

First it has been observed by many that the classical theory cannot distinguish between lack of belief and disbelief. Here, disbelief is associated with evidence indicating the negation of a proposition, whereas lack of belief is associated with not having evidence at all. As Shafer \cite{shafer76} puts it, the classical theory does not allow one to withhold belief from a proposition without according that belief to the negation of the proposition. When we want to apply a theory of probabilities to legal issues, this becomes a relevant issue. Indeed, if certain exculpatory evidence in a case is dismissed, then this may result in less belief in the innocence of the suspect, but it gives no further indication for guilt. 

The second shortcoming of the classical theory is its inability to model ignorance on an individual level in situations where only group information is available. Here is a classical example. 

\begin{example} ({\em The island problem})
\label{ex2}
In the classical version of the island problem (see e.g.\ \cite{sloomee11} and \cite{baldon95}) a crime has been committed on an island, making it a certainty that an inhabitant of the island committed it. In the absence of any further information, the classical point of view is to assign a uniform prior probability over all inhabitants concerning the question who is the culprit. However, this does not correspond to our knowledge. We know for sure that someone in the population committed the crime, but have no further belief about any individual. It would, therefore, be unreasonable to assign any further individual belief to the guilt of an individual, other than the fact that the population to which he or she belongs receives belief 1. This last fact distinguishes members from the population from individuals outside it. However, the combination of assigning degree of belief 1 to the collection of all inhabitants and 0 to each individual is impossible under the classical axioms of probability, although this may be exactly the prior one wants to impose.
\end{example}

We will apply the theory we are about to develop to this example in Section \ref{subsec:forensic}. There we will see that using an uninformative prior, which is possible in our theory, leads to a different result than using a uniform prior. The fact that these priors lead to different results confirms that these priors are really distinct: a uniform prior is not a prior representing ignorance, and using a uniform prior does not lead to the same results as using a prior that does represent ignorance.

This example suggest that the usual additivity, that is, $P(A) + P(B)= P(A \cup B)$ if $A$ and $B$ are disjoint, is not always desirable when $P$ is interpreted epistemologically, as is often the case in legal or forensic settings. In such a setting, one needs to model uncertainty on the level of knowledge that one actually has, and there is no reason to suppose that individual or shared knowledge can always be adequately described by a classical underlying probability distribution, known or unknown. 

We next give a gambling example which is, like Example \ref{ex2}, just another example of the inability of probability distributions to express ignorance. 

\begin{example}
\label{ex3}
Suppose a fair coin is flipped. However, with probability $p>0$, the person flipping the coin gets the opportunity to change the outcome of the flip. In ignorance about the way the person makes his or her decisions, we can not give a probability distribution describing the outcome of this process. We can not even assume that there necessarily exists a probability distribution describing the decisions of this person. Our theory will, nevertheless, allow us to make quantitative statements on which we can base gambling strategies in this situation, see Section \ref{examplegambling}.
\end{example}

Already back in the seventies of the previous century, there has been an attempt by Shafer \cite{shafer76} to develop a theory of probabilities outside the realm of the axioms of Kolmogorov. He introduced the concept of a belief function, which is a generalization of a probability distribution. Belief functions are not necessarily additive and allow for the flexibility that our examples ask for. Based on the concept, Shafer also introduced a calculus for belief functions centered around the so called Dempster's rule of combination. However, his attempt has been criticized fiercely (references below), for various and good reasons, and nowadays belief functions are hardly used, if at all, in mainstream applied probability.

In this article, we aim to re-develop the theory of belief functions, using the basic concept of Shafer, but setting up a new calculus without using Dempster's rule. We think that our revision takes away the three reasons why people have rejected Shafer's belief functions before, which we now discuss.

The first important obstacle is reported by Shafer himself in \cite{shafer81}. Probability has a betting interpretation based on the Dutch Book Theorem, which traces back to Ramsey \cite{ramsey31} and de Finetti \cite{fin37}. Shafer writes that many of his critics rejected them because of the lack of a suitable betting interpretation for belief functions. Shafer himself argues in \cite{shafer81} that no such behavioral interpretation is necessary. We do not have to follow that line of reasoning, because in Section \ref{subsec:betting} we present a betting interpretation based on a characterization of belief functions (Theorem \ref{thm:bettingbelief}), much like the betting interpretation of probability based on the Dutch Book Theorem. 

What is probably the biggest concern about Shafer's belief functions, see e.g.\ \cite{schum94} and \cite{fs86}, is Dempster's rule of combination. This rule of combination is supposed to describe how different belief functions that are based on `independent evidence' should be combined into a new one. Let us be clear about this point: we also reject Dempster's rule and the calculus that stems from it. In Section \ref{sec:dempster} we explain how the rule confuses `independence' of evidence and `independence' of phenomena. While the troubled notion of `independent evidence' has no place in our new theory, we do have a mathematical notion of `independent phenomena' as a generalization of independence in probability theory (see Section \ref{sec:independence}). Despite our rejection of Dempster's rule, in the current article we do develop a very useful calculus of belief functions without using Dempster's rule. The only thing we need is a proper rule for conditioning, and this is much less controversial, if at all. This should take care of the points raised in \cite{schum94} and \cite{fs86}.

The third concern about belief functions concerns the question whether or not they represent knowledge or belief in a meaningful epistemological way. Pearl in \cite{pearl}, for instance, questions whether or not belief functions respect some `rules' of reasoning. For the most part, Pearl's criticism does not apply to our theory and how we want to use it, the exception being a point about belief updating. In Section \ref{sec:conditioning}, after we introduced our rule of conditioning, we address this point.

The theory of belief functions which we are about to re-develop is, foremost, an epistemic theory on the level of individual or shared knowledge. The fact that we undertake this effort implicitly implies that we think that there are many situations in which classical epistemic interpretations fall short; see our examples above. Our motivation for reviving the theory of belief functions hence lies in the fact that knowledge (or lack thereof) does not always fit into the classical framework, and that the classical axioms of probability theory are not always suitable for an epistemic interpretation. For an overview of classical epistemic interpretations we refer to \cite{gala09} and the references therein. 

On the technical level, belief functions are a generalization of classical probability theory in the sense that any probability distribution is also a belief function. So, if there are reasons to assume that the quantities we want to describe can be adequately modeled by assuming an underlying classical probability distribution, then the theory of belief functions allows for that. In other words, we lose nothing.  

For the applications of the theory to forensic examples, we refer to our companion paper \cite{kerkmee15} that has an in-depth discussion of such applications. We will restrict our discussion to finite outcome spaces. In all examples in which we want to apply the theory, the outcome spaces are finite, and it is probably a good idea to study and develop a new theory in the simplest possible setting first anyway.

The current paper is organized as follows. In Section \ref{sec:basic} we introduce belief functions and some basic results. In Section \ref{sec:conditioning}  and \ref{sec:independence} we develop the backbone of our calculus by discussing the concepts of respectively conditioning and independence. In Section \ref{sec:dempster} we explain why we reject Dempster's rule of combination using the insights of our own calculus. In Section \ref{sec:examples} we discuss the behavior of the theory in a gambling example, and with an example in a forensic setting. After that we discuss the relation between belief functions and a special collection of classical probability distributions in Section \ref{subsec:pbel}, a law of large numbers for belief functions in Section \ref{subsec:frequency}, and finally in Section \ref{subsec:betting} we offer a betting interpretation for belief functions.

Is is unavoidable that in the discussion of the basics of the theory, there is some overlap with our companion paper \cite{kerkmee15}. It has been our aim to make both papers self-contained. 

\section{The basics}
\label{sec:basic}

Let $\Omega$ be a finite outcome space. We want to make statements about the elements of $\Omega$ in the presence of uncertainty. The classical way to do this 
is by means of a suitable probability distribution on $\Omega$. A probability distribution assigns a non-negative weight $p(\omega)$ to each element $\omega \in \Omega$ in such a way that the total weight is equal to 1. We may, for instance, express our uncertainty about who is the culprit by means of such a probability distribution. The probability that the culprit can be found in a subset $A$ of $\Omega$ is then equal to 
\begin{equation}
\label{kansen}
P(A):=\sum_{\omega \in A} p(\omega).
\end{equation}
The probability measure $P$ can be interpreted as epistemic, frequentistic or otherwise, depending on the context and personal 
taste. The weight $p(\omega)$ represents the probability, degree of belief, or confidence in the outcome $\omega$, and $P(A)$ represents our probability, degree of belief, or confidence in an outcome which is contained in $A$. In classical probability theory, a subset of $\Omega$ is also called an {\em event} or a {\em hypothesis}, and $P$ describes the probability of all such events or hypotheses.

Next we define basic belief assignments and belief functions. The difference between a {\em basic belief assignment} and a probability distribution, is that the former assigns weights to nonempty \emph{subsets} of $\Omega$ rather than to individual outcomes. We write $2^{\Omega}$ for the collection of all subsets of $\Omega$.

\begin{definition}
A function $m: 2^{\Omega} \rightarrow [0,1]$ is a {\em basic belief assignment} if $m(\emptyset)=0$ and
\begin{equation}
\label{eq:additivitym}
\sum_{C \subseteq \Omega} m(C)=1.
\end{equation}
\end{definition}
Whereas $p(\omega)$ represents the probability or confidence in the outcome $\omega$, $m(C)$ represents our confidence in an outcome in $C$ without any further specification which element of $C$ is the outcome. In slightly different words, we can interpret $m(C)$ as the probability of having knowledge precisely $C$.

It may appear that there is not much difference between $P$ and $m$, but in fact there is. The crucial difference between $P$ and $m$ is that the weight of a subset $C$ of $\Omega$  is not immediately related to the weights of the elements or subsets of $C$. 
For instance, if we have no clue whatsoever about the outcome, that is, if we have no information at all other than that the outcome is in $\Omega$, then we may express this by putting $m(\Omega)=1$ and $m(A)=0$ for all strict subsets of $\Omega$. If we want to assign belief $1/2$ to the union of $a$ and $b$ without making individual statements about $a$ and $b$, then we can express this as $m(\{a,b\})=1/2$ and simultaneously $m(\{a\})=m(\{b\})=0$.
It is also possible that a basic belief functions only assigns positive weight to singletons. In such a case, we are back in the classical situation. 

The quantity $m(C)$ is sometimes referred to as the {\em weight of the evidence} that points precisely to $C$.
We should view $m$ as the analogue of $p$ in the classical description above. Next we define the analogue of $P$, which is called a {\em belief function}. 

We want to quantify how much belief we can assign to a subset $A$ of 
$\Omega$. To this end, we consider all sets $C$ in $\Omega$ with $C \subseteq  A$, which are precisely the events whose occurrence implies the occurrence of $A$.
The belief in a set $A$ now is the sum of the weights of all subsets of $A$.  In terms of evidence, the belief in $A$ is the total weight of all evidence which implies $A$. 

\begin{definition}
\label{def:belief}
Given a basic belief assignment $m: 2^{\Omega} \rightarrow [0,1]$, the corresponding {\em belief function} $\B: 2^\Omega \rightarrow [0,1]$ is defined by
\begin{equation}
\B(A) := \sum_{C \subseteq A} m(C).
\end{equation}
\end{definition}

The most natural interpretation of the theory is to see a subset of outcomes as a representation or description of individual or shared knowledge, and that we quantify our knowledge with belief functions. In slightly different words, the belief in $A$ is the probability to have information or evidence which implies the occurrence of $A$. 

The set $\Omega$ on the one hand, and singletons on the other, are the extreme states of knowledge, representing respectively ignorance (we do not know anything about the outcome other than that it is in $\Omega$) and complete knowledge (we know which element of $\Omega$ is the outcome). The empty set represents knowing a contradiction which is impossible and hence has probability zero. 

\begin{example} ({\em Probability distributions})
Every probability distribution is a belief function. To see this, let $P: 2^\Omega \rightarrow [0,1]$ be a probability distribution. Set $m(\{ \omega \})=P(\{ \omega\})$ for all $\omega \in \Omega$ and $m(C)=0$ for all $C$ such that $|C|>1$. Then we get
\begin{equation}
\B(A) = \sum_{a \in A} m(\{a\}) = \sum_{a \in A} P(\{a\}) = P(A)
\end{equation}
for every $A \subseteq \Omega$. Probability distributions are belief functions for which the corresponding basic belief assignment only assigns positive weight to singletons.

If $m(C)>0$ for some $C$ with $|C|>1$, then $\B$ is not a probability distribution because it not additive: for any nonempty, disjoint $A,B \subseteq \Omega$ such that $A\cup B = C$ we find
\begin{equation}
\B(A \cup B) > \B(A) + \B(B).
\end{equation}
\end{example}

\begin{example}
\label{exsecond}
Suppose we want to state our beliefs about a suspect being guilty or innocent, so $\Omega = \{$guilty, innocent$\}$ is our outcome space. If the only evidence we have, is evidence of weight $p$ that the suspect is innocent, then we have $m(\{$innocent$\})=p$, $m(\{$guilty$\})=0$ and $m(\Omega)=1-p$. Notice that the belief that the suspect is guilty is not equal to 1 minus the belief that the suspect is innocent. The corresponding belief function $\B$ is given by $\B(\{$guilty$\})=0$, $\B(\{$innocent$\})=p$ and $\B(\Omega)=1$.
\end{example}

\begin{example}
\label{exfirst}
The function $m$ for which $m(\Omega)=1$ and $m(A) =0$ for all other $A \subseteq \Omega$ is a basic belief assignment. The corresponding belief function assigns belief 1 to $\Omega$ and belief zero to all strict subsets of $\Omega$. This belief function expresses total ignorance within a given population $\Omega$, except for the fact that the outcome must be in $\Omega$. As such it addresses the problem noticed in Example \ref{ex2}.
\end{example}

\begin{example}
With reference to the situation described in Example \ref{ex3}, we let $\Omega=\{h,t\}$ be the outcome space of the first croupier, where $h$ stands for `head' and $t$ for `tail'. We set the basic belief assignment $m:2^{\Omega} \rightarrow [0,1]$ by  $m(\{ h \}) = m(\{ t \}) = \frac{1}{2}(1-p)$ and $m(\{h,t\})=p$. 
\end{example}

There is a natural way to identify a belief functions with a collection of probability distributions, namely the collection $\mathcal{P}_{\B}$ of all probability distributions that one can obtain by distributing a total mass of $m(C)$ over the elements of $C$, for all $C \subseteq \Omega$. For instance, if $m(\Omega)=1$, then the corresponding $\mathcal{P}_{\B}$ consists of {\em all} probability distributions on $\Omega$. If $\Omega=\{0,1\}$, and $m(\{0\})=1-m(\{0,1\})=\frac13$, then $\mathcal{P}_{\B}$ consists of all probability distributions that assign probability at least $\frac13$ to $0$. 

It is not difficult to see that
\begin{equation}
\label{identify}
\B(A)= \min_{P \in \mathcal{P}_{\B}} P(A).
\end{equation}
Indeed, only for sets $C$ which are completely contained in $A$, their mass $m(C)$ contributes to $\B(A)$, and those are precisely the sets whose mass cannot be moved outside $A$. This identity might lead to the idea to interpret $\mathcal{P}_\B$ as describing all the possible underlying probability distributions, one of which is the ``correct" one, still keeping the idea that the actual situation is described by an (unknown) probability distribution from the collection $\mathcal{P}_{\B}$. 

It is this interpretation which is criticized in \cite{pearl}. This interpretation of $\mathcal{P}_\B$ is closely tied with the theory of `minimum probability' that has been studied, see e.g.\ \cite{shafer81}. This theory, however, is distinct from our theory. In Section \ref{sec:conditioning}, for example, we will see that conditional belief is \emph{not} equal to the infimum over all conditional probabilities in $\mathcal{P}_\B$. In Section \ref{subsec:pbel} we discuss the interpretation of $\mathcal{P}_\B$ in more detail. 

Whereas belief functions are in general not additive, it follows directly from the definition that they are superadditive, i.e.
\begin{equation}
\B(A \cup B) \geq \B(A) + \B(B)
\end{equation}
for all disjoint $A,B \subseteq \Omega$. The following theorem by Shafer \cite{shafer76} shows that belief functions are characterized by a property between additivity and super-additivity.

\begin{theorem}
\label{thm:beliefalternate}
A function $\B: 2^\Omega \rightarrow [0,1]$ is a belief function if and only if
\begin{itemize}
\item[(B1)] $\B(\Omega)=1$
\item[(B2)] For all $A,B \subseteq \Omega$ we have
\begin{equation}
\label{eq:beliefadditivity}
\B(A \cup B) \geq \B(A) + \B(B) - \B(A \cap B).
\end{equation}
\end{itemize}
\end{theorem}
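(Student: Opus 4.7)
The plan is to use Möbius inversion on the Boolean lattice $2^\Omega$ as the bridge between basic belief assignments and belief functions.

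For the ``only if'' direction, assume $\B$ arises from a bba $m$. Property (B1) is immediate, since every $C \subseteq \Omega$ contributes to $\B(\Omega) = \sum_C m(C) = 1$. For (B2), I would expand each of the four terms in the inequality as a sum of $m(C)$-weights and compare coefficients. For a fixed $C \subseteq A \cup B$, the coefficient of $m(C)$ on the left is $1$, while on the right it equals
\[
\mathbf{1}_{C \subseteq A} + \mathbf{1}_{C \subseteq B} - \mathbf{1}_{C \subseteq A \cap B}.
\]
A short case split (is $C$ inside $A$ only, $B$ only, both, or neither?) shows the right-hand coefficient is at most $1$, and since $m(C) \geq 0$, summing over all $C$ yields (B2). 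Terms with $C \not\subseteq A \cup B$ contribute $0$ to both sides.

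For the ``if'' direction, assume $\B$ satisfies (B1) and (B2) (together with the standing normalization $\B(\emptyset) = 0$). Define
\[
m(C) := \sum_{D \subseteq C} (-1)^{|C \setminus D|} \B(D).
\]
Two of the three bba axioms then come for free from lattice combinatorics: Möbius inversion on $(2^\Omega, \subseteq)$ recovers $\B(A) = \sum_{C \subseteq A} m(C)$ for every $A$, specializing to $A = \Omega$ and applying (B1) gives $\sum_C m(C) = 1$, and $m(\emptyset) = \B(\emptyset) = 0$.

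The real obstacle, and where I expect the main difficulty to lie, is showing $m(C) \geq 0$ for every $C$. This is the step where (B2) has to do genuine work. The natural route is to establish by induction the $n$-fold Poincaré-type generalization
\[
\B\!\left(\bigcup_{i=1}^n A_i\right) \geq \sum_{\emptyset \neq I \subseteq \{1,\dots,n\}} (-1)^{|I|+1} \B\!\left(\bigcap_{i \in I} A_i\right),
\]
and then specialize to $A_i := C \setminus \{c_i\}$ for an enumeration $C = \{c_1,\dots,c_n\}$; after reorganizing the alternating sum, the resulting inequality reads exactly $m(C) \geq 0$. Bootstrapping this $n$-fold inequality out of the pairwise version (B2) is the delicate point, since splitting off one set at a time via (B2) applied to $(A_1 \cup \dots \cup A_{n-1}, A_n)$ leaves an unwanted $\B((A_1 \cup \dots \cup A_{n-1}) \cap A_n)$ term that must itself be handled inductively. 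I would therefore attempt a joint induction on $n$ and on $|\bigcup_i A_i|$, reducing the intersections $(A_1 \cap A_n),\dots,(A_{n-1}\cap A_n)$ to smaller cases and reassembling the signs via inclusion-exclusion bookkeeping.
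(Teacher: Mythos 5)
Your ``only if'' direction is fine, and your diagnosis of where the work lies in the ``if'' direction --- nonnegativity of the M\"obius transform $m(C)=\sum_{D\subseteq C}(-1)^{|C\setminus D|}\B(D)$, which as you say is exactly the $n$-fold Poincar\'e inequality specialized to $A_i=C\setminus\{c_i\}$ --- is also correct. But the step you flag as ``delicate,'' namely bootstrapping the $n$-fold inequality out of the pairwise condition (B2), is not delicate: it is impossible. Pairwise $2$-monotonicity does not imply total monotonicity. Take $\Omega=\{a,b,c\}$ and let $\B(\emptyset)=0$, $\B(S)=0$ for singletons, $\B(S)=\tfrac12$ for two-element sets, and $\B(\Omega)=1$. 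Every instance of (B2) can be checked directly (the only nontrivial cases are two distinct pairs, where $1+0\geq\tfrac12+\tfrac12$ holds with equality, and disjoint sets, where $\B(A\cup B)\geq\B(A)+\B(B)$ is immediate), yet
\begin{equation*}
m(\Omega)=\B(\Omega)-\sum_{|S|=2}\B(S)+\sum_{|S|=1}\B(S)-\B(\emptyset)=1-\tfrac32+0-0=-\tfrac12<0,
\end{equation*}
so this $\B$ is not a belief function. No amount of inclusion-exclusion bookkeeping will get you from (B2) to $m(C)\geq0$ once $|C|\geq3$.

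The underlying problem is that the theorem as printed is not correct, and the paper offers no proof to compare against --- it is stated as a citation of Shafer. Shafer's actual characterization in \cite{shafer76} assumes, in place of the pairwise (B2), the full family of inequalities
\begin{equation*}
\B\Bigl(\bigcup_{i=1}^n A_i\Bigr)\;\geq\;\sum_{\emptyset\neq I\subseteq\{1,\ldots,n\}}(-1)^{|I|+1}\,\B\Bigl(\bigcap_{i\in I}A_i\Bigr)
\end{equation*}
for \emph{every} $n$, together with $\B(\emptyset)=0$ (which, as you noticed, must also be added: the constant function $1$ satisfies (B1) and (B2) but is not a belief function). With that stronger hypothesis your argument closes with no induction at all: the $n$-fold inequality applied to the sets $C\setminus\{c_i\}$ \emph{is} the statement $m(C)\geq0$, and M\"obius inversion together with (B1) supplies the remaining bba axioms, exactly as you outline. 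So your strategy is the right one for the theorem Shafer actually proved; the obstacle you correctly isolated is a defect of the statement as given here, not of your approach.
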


\begin{remark}
To see that (B2) is stricter than super-additivity, consider the following example. Let $\Omega=\{a,b,c\}$ and set $f(\Omega)=f(\{a,b\})=f(\{b,c\})=1$, $f(\{b\})=\frac{1}{2}$ and $f(C)=0$ for all other $C$. It is easy to check $f$ is superadditive, but (B2) does not hold since
\begin{equation}
1 = f(\Omega) \not\geq f(\{a,b\})+f(\{b,c\})-f(\{b\}) = \frac{3}{2}.
\end{equation}
\end{remark}

Theorem \ref{thm:beliefalternate} can be used to give an alternative definition of belief functions without deriving them from basic belief assignments. Using the theorem we can directly check whether or not a function $f: 2^\Omega \rightarrow [0,1]$ is a belief function. We can use the following lemma by Shafer \cite{shafer76} to retrieve the basic belief assignment corresponding to a given belief function.

\begin{lemma}
Every belief function $\B: 2^\Omega \rightarrow [0,1]$ has a unique corresponding basic probability assignment $m: 2^{\Omega} \rightarrow [0,1]$ which is given by
\begin{equation}
m(A) = \sum_{C \subseteq A} (-1)^{|A|-|C|} \B(C).
\end{equation}
\end{lemma}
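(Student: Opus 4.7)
The plan is to recognize the defining relation $\B(A) = \sum_{C \subseteq A} m(C)$ as a zeta transform on the Boolean lattice $2^\Omega$ and then to invert it via the corresponding Möbius inversion formula, whose Möbius function on $2^\Omega$ is $\mu(C,A) = (-1)^{|A|-|C|}$ for $C \subseteq A$.

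For existence, I would simply take the basic belief assignment $m$ already guaranteed by the definition of $\B$. For the explicit formula, I would fix $A \subseteq \Omega$ and compute directly
\begin{equation*}
\sum_{C \subseteq A} (-1)^{|A|-|C|} \B(C) = \sum_{C \subseteq A} (-1)^{|A|-|C|} \sum_{D \subseteq C} m(D) = \sum_{D \subseteq A} m(D) \sum_{D \subseteq C \subseteq A} (-1)^{|A|-|C|}.
\end{equation*}
Writing $n = |A \setminus D|$ and substituting $E = C \setminus D$, the inner sum becomes $\sum_{E \subseteq A \setminus D} (-1)^{n - |E|} = \sum_{k=0}^{n} \binom{n}{k} (-1)^{n-k} = (1-1)^{n}$, which equals $1$ when $D = A$ (i.e.\ $n=0$) and $0$ otherwise. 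Thus the right-hand side collapses to $m(A)$.

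For uniqueness, I would observe that the same computation shows that any $m$ satisfying $\B(A) = \sum_{C \subseteq A} m(C)$ for every $A$ must equal the expression $\sum_{C \subseteq A} (-1)^{|A|-|C|} \B(C)$; equivalently, the map sending $m$ to $\B$ is given by a unipotent triangular matrix with respect to any linear extension of inclusion, hence is invertible.

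I do not expect a serious obstacle here: the whole lemma is essentially the combinatorial identity $\sum_{E \subseteq F}(-1)^{|F|-|E|} = [F = \emptyset]$ applied to the subset lattice. The only mildly delicate point is bookkeeping the interchange of sums and the re-indexing by $E = C \setminus D$, so I would execute that step carefully and otherwise leave the manipulation as routine.
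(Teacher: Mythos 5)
Your argument is correct and complete: the interchange of sums, the re-indexing $E = C \setminus D$, and the identity $\sum_{E \subseteq F}(-1)^{|F|-|E|} = 0$ unless $F=\emptyset$ together give both the formula and, applied to an arbitrary $m$ satisfying the defining relation, uniqueness. The paper itself gives no proof, deferring to Shafer, and Shafer's argument is exactly this M\"obius inversion on the subset lattice, so your route is the standard one.
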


\section{Conditioning}
\label{sec:conditioning}

We have discussed the mathematical definitions and basic properties of belief functions. The next thing on the agenda is to investigate how belief functions change when additional or new information is provided. This is akin to the classical situation in which a prior probability is updated into a posterior one, based on additional information. In this section we explain how this works in our setting. The first thing to do is to determine how a belief function changes under additional information, or under a certain hypothesis. This means we need to understand how conditioning works in our context.

The rule we propose for conditioning is described as follows. Suppose we have a basic belief assignment $m$ and corresponding belief function $\B$. We want to condition on an event $H$. The weight of the evidence $m(A)$ for $A$ now becomes weight of evidence for $A \cap H$, if $A$ is consistent with $H$ in the sense that $A \cap H \not=\emptyset$. If $A \cap H =\emptyset$, then the new weight of evidence for $A$ becomes zero. Next we rescale the weights of the evidence in such a way that the weights again sum up to $1$. This can of course only be done if there is evidence with positive weight that is consistent with $H$. This leads to the following definition.

\begin{definition}
\label{def:conditionalm}
Let $m: 2^\Omega \rightarrow [0,1]$ be a basic belief assignment and $\B$ the corresponding belief function. For $H \subseteq \Omega$ such that $\B(H^c) \not=1$  we define the conditional basic belief assignment $m_H: 2^\Omega \rightarrow [0,1]$ by
\begin{equation}
\label{eq:mcond}
m_H(A) := \frac{\sum_{B \cap H = A} m(B)}{1 - \sum_{B \cap H = \emptyset} m(B)},
\end{equation}
for $A \not= \emptyset$ and $m_H(\emptyset)=0$.
\end{definition}

The corresponding conditional belief function $\B_H$ can now be obtained in the obvious way from the basic belief assignment $m_H$, as follows:
\begin{equation}
\label{conditionalbelief}
\begin{aligned}
\B_H(A) & =  \sum_{B \subseteq A } m_H(B) \\
& = \frac{ \sum_{\emptyset \not= C \cap H \subseteq A} m(C)} { 1 - \sum_{C \cap H = \emptyset} m(C) } \\
& = \frac{ \sum_{C \subseteq A\cup H^c} m(C) - \sum_{C \subseteq H^c} m(C)} { 1 - \sum_{C \subseteq H^c} m(C) } \\
& =  \frac{ \B(A \cup H^c) - \B(H^c)}{1-\B(H^c)}
\end{aligned}
\end{equation}
for all $A$ and $H$ such that $\B(H^c) \not=1$.

Readers familiar with the work of Shafer \cite{shafer76} will notice that (\ref{conditionalbelief}) is the same formula as (3.8) in \cite{shafer76}), and is sometimes called Dempster-conditioning. This name is somewhat misleading though. Indeed, Shafer derives the formula as a special case of Dempster's rule, a rule which we reject. It so happens that one can derive (\ref{conditionalbelief}) without Dempster's rule, only making use of our definition of conditional belief. 

Interpreting $m(A)$ as the probability of having knowledge $A$ naturally leads to a probability distribution $P$ on the collections of subsets of $\Omega$. That is, for $\mathcal{A} \subseteq 2^\Omega$ we write
\begin{equation}
\label{kansmaat}
P(\mathcal{A}) = \sum_{A \in \mathcal{A}} m(A).
\end{equation} 
The belief in $A$ now is the probability that $A$ is implied, that is
\begin{equation}
\label{eq:epist}
\B(A)=P(\{C \in 2^\Omega \;:\; C \subseteq A\}).
\end{equation}

We can now express (\ref{def:conditionalm}) in terms of $P$ by writing for $A \not= \emptyset$
\begin{equation}
\label{eq:epistemicm}
\begin{aligned}
m_H(A) & = \frac{\sum_{C \cap H = A} m(C)}{ \sum_{C \cap H \not= \emptyset} m(C)} \\
& = \frac{P(\{ C \subseteq \Omega \;:\; C \cap H = A \})}{P(\{C \subseteq \Omega \;:\; C \cap H \not= \emptyset \})} \\
& = P(\{ C \subseteq \Omega \;:\; C \cap H = A \} \; | \; \{C \subseteq \Omega \;:\; C \cap H \not= \emptyset \}).
\end{aligned}
\end{equation}
Conditional belief can be expressed as
\begin{equation}
\label{eq:epistemicbel}
\begin{aligned}
\B_H(A) & = \sum_{B \subseteq A } m_H(B) \\
& = \sum_{B \subseteq A } P(\{ C \subseteq \Omega \;:\; C \cap H = B \} \; | \; \{C \subseteq \Omega \;:\; C \cap H \not= \emptyset \}) \\
& = P(\{ C \subseteq \Omega \;:\; C \cap H \subseteq A \} \; | \; \{C \subseteq \Omega \;:\; C \cap H \not= \emptyset \}).
\end{aligned}
\end{equation}
In words, (\ref{eq:epistemicm}) and (\ref{eq:epistemicbel}) show that our notion of conditioning can be seen as classically conditioning $P$ on the collection of outcomes that are consistent with $H$, and then lumping all outcomes that are the same under $H$ together.

\begin{example}
In the special case that $\B=P$ is a probability distribution, the notion of conditional belief coincides with the notion of conditional probability, i.e.
\begin{equation}
\B_H(A) = \frac{\sum_{\omega \in A \cap H} m(\{\omega\}) }{\sum_{\omega \in H} m(\{\omega\})} = P(A|H),
\end{equation}
for every $A \subseteq \Omega$ and $H$ such that $1-\B(H^c)=P(H)>0$.
\end{example}

\begin{example}
\label{ex:conditioning1}
Suppose we have a case in which the suspects are two parents and their son, so $\Omega=\{\mathrm{Father},\mathrm{Mother},\mathrm{Son}\}$. We have a lot of evidence that points to the parents, none of which points to one of them in particular. Further, we have some evidence that points to the son. The corresponding basic belief assignment is, say, $m(\{\mathrm{Father},\mathrm{Mother}\})=\frac{9}{10}$ and $m(\{\mathrm{Son}\})=\frac{1}{10}$. Under the hypothesis $H$ that it is a man, i.e.\ $H=\{\mathrm{Father},\mathrm{Son}\}$, the evidence against the parents counts as evidence against the father, so
\begin{equation}
\label{condi}
 m_H(\{ \mathrm{Father} \}) = \frac{9}{10}.
\end{equation}
\end{example}

The next example shows that while (\ref{identify}) holds, there are $\B$ and $A,H \subseteq \Omega$ such that
\begin{equation}
\label{eFH}
\B_H(A) \not = \inf \{ P(A|H) \;:\; P \in \mathcal{P}_\B \}.
\end{equation}
The right hand side in (\ref{eFH}) goes under the name FH-conditioning, after Fagin and Halpern \cite{FH}. Hence, the example shows that Dempster-conditioning, and FH-conditioning need not lead to the same result. 

\begin{example} ({\em Continuation of Example \ref{ex:conditioning1}}.)
\label{ex:conditioning2}
The collection of probability distributions that we can obtain by distributing weight on $\{$Father,Mother$\}$ over $\{$Father$\}$ and $\{$Mother$\}$ is 
\begin{equation}
\mathcal{P}_\B = \left\{ P_c \;:\; 0 \leq c \leq \frac{9}{10} \right\},
\end{equation}
where the probability distribution $P_c: 2^\Omega \rightarrow [0,1]$ is given by
\begin{equation}
\begin{aligned}
 P_c(\{ \mathrm{Father} \}) & :=  c, \\
 P_c(\{ \mathrm{Mother} \}) & :=  \frac{9}{10} - c, \\
 P_c(\{ \mathrm{Son} \}) & :=  \frac{1}{10}.
 \end{aligned}
\end{equation}
Since
\begin{equation}
P_c(\{ \mathrm{Father} \}|H) = \frac{c}{c+\frac{1}{10}},
\end{equation}
we find
\begin{equation}
\label{condiklassiek}
\inf \{P(\{ \mathrm{Father} \}|H) \;:\; P \in \mathcal{P}_\B \} = 0 .
\end{equation}
Compared to (\ref{condi}), we think the answer in (\ref{condi}) seems more appropriate than the one in 
(\ref{condiklassiek}).
\end{example}

Although our notion of conditioning generalizes the classical notion, there  are significant differences between our conditioning and the classical one. To this end, consider the following instructive example which we subsequently discuss, and which appears in a slightly different formulation also in \cite{pearl}.

\begin{example}
\label{pasop}
Suppose $\Omega = \{0,1\}^2$, and write an element of $\Omega$ as $(x,y)$. Consider the following basic belief assignment $m$ on $\Omega$:
\begin{equation}
\begin{aligned}
m \left( \; \begin{tabular}{ | l | c | c | c |}
    \hline
 & $x=0$ & $x=1$ \\ \hline
$y=0$ &  $\ast$   &     \\ \hline
$y=1$ &  $\ast$  &       \\ \hline
\end{tabular} \; \right) = \frac{1}{2}
\end{aligned}
\end{equation}
and
\begin{equation}
\begin{aligned}
m \left( \; \begin{tabular}{ | l | c | c | c |}
    \hline
 & $x=0$ & $x=1$ \\ \hline
$y=0$ &    & $*$    \\ \hline
$y=1$ &     &  $*$        \\ \hline
\end{tabular}  \; \right) = \frac{1}{2}.
\end{aligned}
\end{equation}
Following the rules of our calculus, it is not difficult to see that $\B(x=y)=0$.
However, this outcome may not be so intuitive at first sight, which becomes apparent when we first condition on the outcome of $y$. Indeed, we have that $\B_{y=0}(x=y)=\B_{y=1}(x=y)=\frac12$ while at the same time $\B(x=y)=0$. This phenomenon deserves a discussion. 
\end{example}

We can gain understanding of the paradox in Example \ref{pasop} by looking at the law of total probability from classical probability calculus:
\begin{equation}
\label{totalprob}
P(A)= P(A|B^c)P(B^c) + P(A|B)P(B),
\end{equation}
for all events $A$ and $B$ with $P(B)>0$. This law, combined with the fact that $P(B)+P(B^c)=1$, gives the very intuitive result that if 
$P(A|B)=P(A|B^c)=\alpha$, say, then it follows that $P(A)$ must also be equal to $\alpha$. (In some references this phenomenon is called the {\em sandwich principle}, see e.g.\ \cite{pearl} and references therein.) However, the analogue for belief functions, that is, if $\B_B(A)=\B_{B^c}(A)=\alpha$, then it follows that $\B(A)$ must also be equal to $\alpha$, does not hold in general, as Example \ref{pasop} illustrates. We now explain why this does not hold in Example \ref{pasop}.

First we note that $\B_{y=0}(x=y)=\B_{y=1}(x=y)=\frac12$ is not at all controversial: if we simply know the value of $y$, then all uncertainty that is left, is that of a fair coin flip. The paradox arises, because at the same time we have $\B(x=y)=0$. This zero belief is explained by the fact that we do not know how the outcome of $y$ is produced. It may in fact be the case that for some reason we do not know, the outcome of $y$ is always the opposite of $x$. Therefore, our belief in $x=y$ should indeed be zero. Based on $\B_{y=0}(x=y)$ and $\B_{y=1}(x=y)$, for which how $y$ is produced is completely irrelevant, we can not infer anything about $\B(x=y)$. 

It would be an entirely different matter to condition on the outcome of $x$. We know how the outcome of $x$ is produced, namely as the result of a fair coin flip. And once we know the outcome of $x$, we are still ignorant about the outcome of $y$, i.e. $\B_{x=0}(x=y)=\B_{x=1}(x=y)=0$ and therefore it is completely reasonable to conclude that $\B(x=y)=0$. So when we condition on $x$, then the paradox does not arise. This is a special case of a more general situation in which an analogue of the law of total probability does hold, which we express in the following lemma. The proof follows immediately from definitions.

\begin{lemma}
Let $B_1,\ldots,B_n \subseteq \Omega$ be a partition of $\Omega$ such that $\B(B_i)>0$ for every $i$ and for every $C$ with $m(C)>0$ we have $C \subseteq B_i$ for some $i$. Then
\begin{equation}
\label{eq:totbel}
\sum_{i=1}^n \B(B_i) = 1 \quad \mathrm{and}\quad \B(A) = \sum_{i=1}^n \B(B_i) \B_{B_i}(A)
\end{equation}
for all $A \subseteq \Omega$.
\end{lemma}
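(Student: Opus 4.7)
The plan is to work directly with the basic belief assignment $m$, using the partition hypothesis to sort the $C$'s with $m(C)>0$ according to which $B_i$ they lie in. First I would observe that since the $B_i$ are disjoint, any nonempty $C$ is contained in at most one $B_i$; combined with the hypothesis, every $C$ with $m(C)>0$ lies in exactly one $B_i$. Interchanging the two sums in $\sum_{i=1}^n \B(B_i)=\sum_{i=1}^n \sum_{C\subseteq B_i} m(C)$ and using $\sum_C m(C)=1$ then immediately gives the first identity.

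For the second identity, the key preliminary step is to compute $\B(B_i^c)$. Since a $C$ with $m(C)>0$ satisfies $C\subseteq B_i^c$ iff $C\subseteq B_j$ for some $j\neq i$, one finds $\B(B_i^c)=\sum_{j\neq i}\B(B_j)=1-\B(B_i)$. In particular $\B(B_i^c)\neq 1$, so conditioning on $B_i$ is well defined, and the denominator $1-\B(B_i^c)$ appearing in formula (\ref{conditionalbelief}) simplifies to $\B(B_i)$.

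Next I would apply (\ref{conditionalbelief}) to get $\B_{B_i}(A)=(\B(A\cup B_i^c)-\B(B_i^c))/\B(B_i)$ and analyze $\B(A\cup B_i^c)$ in the same spirit: the $C$'s with $m(C)>0$ contained in $A\cup B_i^c$ split into those lying in some $B_j$ with $j\neq i$ (all of which are automatically in $B_i^c$) and those lying in $B_i$ (which must then satisfy $C\subseteq A$, hence $C\subseteq A\cap B_i$). This yields
\begin{equation}
\B(A\cup B_i^c)=\bigl(1-\B(B_i)\bigr)+\sum_{C\subseteq A\cap B_i} m(C),
\end{equation}
so that $\B(B_i)\B_{B_i}(A)=\sum_{C\subseteq A\cap B_i} m(C)$.

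Finally, summing over $i$ and again invoking the partition hypothesis, each $C$ with $m(C)>0$ and $C\subseteq A$ contributes to the sum exactly once (through the unique $B_i$ containing it), while any $C$ not contained in any $B_i$ has $m(C)=0$. Hence $\sum_i \B(B_i)\B_{B_i}(A)=\sum_{C\subseteq A} m(C)=\B(A)$. There is no real obstacle here; the argument is an accounting exercise, and the only thing one must be careful about is that the partition hypothesis is used twice — once to get $\B(B_i^c)=1-\B(B_i)$ (which makes the conditioning denominator collapse) and once at the end to ensure that the sum over $i$ reconstructs all of $\B(A)$ without overcounting.
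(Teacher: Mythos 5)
Your proof is correct, and it is the argument the paper intends: the authors state only that the lemma ``follows immediately from definitions,'' and your computation is the straightforward verification of that claim, correctly using the partition hypothesis both to get $\B(B_i^c)=1-\B(B_i)$ and to avoid overcounting when summing over $i$.
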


Notice that in Example \ref{pasop} with $B_1=\{y=0\}$ and $B_2=\{y=1\}$, the condition of the lemma is, as expected, not satisfied. For $A=\{x=y\}$, the second part of (\ref{eq:totbel}) does hold, because we do have that
$$
\B(x=y)=\B_{y=0}(x=y)\B(y=0) + \B_{y=1}(x=y) \B(y=1),
$$
since both sides are equal to zero. However, the second part of (\ref{eq:totbel}) does not always hold, since for $A=\{x=0\}$ we find
$$
\B(x=0) \not= \B_{y=0}(x=0)\B(y=0) + \B_{y=1}(x=0) \B(y=1).
$$

\section{Independence}
\label{sec:independence}

Now that we have a notion of conditioning, we can also introduce a notion of independence, and to this end we consider the following situation. Let $\Omega_1$ and $\Omega_2$ be outcome spaces of two phenomena that we would describe as `independent'. Now we consider this two phenomena simultaneously. We set $\Omega := \Omega_1 \times \Omega_2$ and let $X: \Omega \rightarrow \Omega_1$ and $Y: \Omega \rightarrow \Omega_2$ be the projections onto respectively the first and second coordinate. On $\Omega$ we define a basic belief assignment $m: 2^\Omega \rightarrow [0,1]$ and corresponding belief function $\B$. We want $m$ and $\B$ to reflect the `independent' nature of the two phenomena. To do that, we need a mathematical definition of independence that is consistent with our intuitive idea about  `independence'. 

There are now at least three natural ways to proceed, and we explore them now, together with their relationships.
 
In the first approach we take conditional beliefs as the starting point, and require that
\begin{equation}
\label{ind1}
\B_{Y \in B}(X \in A) = \B(X \in A)
\end{equation}
and
\begin{equation}
\label{ind2}
\B_{X \in A}(Y \in B) = \B(Y \in B),
\end{equation}
for all $A\subseteq \Omega_1$ and $B \subseteq \Omega_2$ for which the conditional beliefs are defined. It is not difficult to show directly that (\ref{ind1}) and (\ref{ind2}) are equivalent, but this will follow also from Theorem \ref{stellingonaf} below, so we do not prove this here. 

In the second approach, we proceed via a product form for the belief $\B(X \in A; Y \in B)$. As a natural generalization of independence in probability theory, we require that this product is equal to the product of the `marginal' beliefs, i.e.
\begin{equation}
\label{appr2}
\B(X \in A; Y \in B) = \B(X \in A)\B(Y \in B),
\end{equation}
for all $A \subseteq \Omega_1$ and $B \subseteq \Omega_2$. 

Instead of looking at a product form for $\B(X \in A; Y \in B)$, we can also look at a product form for $m(X \in A; Y \in B)$, which is our third approach.  We first define $\B_1$ and $\B_2$ to be the `marginal' belief functions, i.e.
\begin{equation}
\B_1(A) := \B(X \in A) \;\; \mathrm{and} \;\; \B_2(B) := \B(Y \in B),
\end{equation}
for all $A \subseteq \Omega_1$ and $B \subseteq \Omega_2$. 
It follows from Theorem \ref{thm:beliefalternate} that $\B_1$ and $\B_2$ are indeed belief functions.
Let $m_1$ and $m_2$ the corresponding `marginal' basic belief assignments of respectively $\B_1$ and $\B_2$. Since
\begin{equation}
\begin{aligned}
\B_1(A) & =  \sum_{B \subseteq A \times \Omega_2} m(B) \\
& =  \sum_{B |\; X(B) \subseteq A} m(B) \\
& = \sum_{B \subseteq A} \;\; \sum_{C|\; X(C)=B} m(C),
\end{aligned}
\end{equation}
it follows that
\begin{equation}
\label{eq:defmarginalm}
m_1(A) = \sum_{C \; |\; X(C)=A} m(C).
\end{equation}
A similar expression of course is true for $m_2$. Notice that $m_1(A)$ is in general \emph{not} the same as $m(X \in A)$, since the set $\{X \in A\}$ is in general not the only set with positive basic belief which projects onto $A$. Our third approach to independence uses classical independence to require that
\begin{equation}
\label{ind3}
m(X \in A; Y \in B) = m_1(A)m_2(B)
\end{equation}
for all $A \subseteq \Omega_1$ and $B \subseteq \Omega_2$. 

Now we investigate the relations between the three approaches. The next two examples show that the requirements of respectively the first and second approach are weaker than the requirement of the third. 

\begin{example}
Suppose $\Omega_1=\Omega_2=\{0,1\}$ and
\begin{equation}
\begin{aligned}
m \left( \; \begin{tabular}{ | l | c | c | c |}
    \hline
 & X=0 & X=1 \\ \hline
Y=0 &  $\ast$   & $\ast$    \\ \hline
Y=1 &  $\ast$  & $\ast$      \\ \hline
\end{tabular} \; \right) = \frac{1}{4},  \\
m \left( \; \begin{tabular}{ | l | c | c | c |}
    \hline
 & X=0 & X=1 \\ \hline
Y=0 &  $\ast$  &     \\ \hline
Y=1 &     &          \\ \hline
\end{tabular}  \; \right) = \frac{1}{2}, \\
m \left( \;  \begin{tabular}{ | l | c | c | c |}
    \hline
 & X=0 & X=1 \\ \hline
Y=0 &  $\ast$   & $\ast$    \\ \hline
Y=1 &  $\ast$  &          \\ \hline
\end{tabular} \; \right) = \frac{1}{4}.  \\
\end{aligned}
\end{equation}
Then
\begin{equation}
\begin{aligned}
\B_{Y=1}(X=1)= \B_{Y=0}(X=1)= \B(X=1) & = 0, \\
\B_{Y=1}(X=0)= \B_{Y=0}(X=0)= \B(X=0) & = \frac{1}{2},
\end{aligned}
\end{equation}
so the requirement of the first approach is satisfied. But
\begin{equation}
\frac{1}{2} = m(X=0; Y =0) \not= m_1(\{0\})m_2(\{0\}) = \frac{1}{2} \cdot \frac{1}{2} = \frac{1}{4}.
\end{equation}
\end{example}

\begin{example}
Suppose $\Omega_1=\Omega_2=\{0,1\}$ and
\begin{equation}
\begin{aligned}
m \left( \;  \begin{tabular}{ | l | c | c | c |}
    \hline
 & X=0 & X=1 \\ \hline
Y=0 &  $\ast$  &    \\ \hline
Y=1 &   &  $\ast$       \\ \hline
\end{tabular} \; \right) = 1.
\end{aligned}
\end{equation}
We have
\begin{equation}
\B(X \in A; Y \in B) = \B(X \in A)\B(Y \in B)
\end{equation}
trivially for all $A,B \subseteq \{0,1\}$, but
\begin{equation}
0 = m(X \in \{0,1\}; Y \in \{0,1\}) \not= m_1(\{0,1\})m_2(\{0,1\}) = 1.
\end{equation}
\end{example}

As suggested by the examples, the problem is that there is positive mass on sets that are not `rectangular', i.e. $S \subseteq \Omega$ such that $S \not= X(S) \times Y(S)$. 

Consider a $C \subseteq \Omega$ with $m(C)>0$. The set $X(C) \subseteq \Omega_1$ gives the outcomes of the first phenomenon that $C$ is consistent with. If we condition on $\{Y=y\}$ for some $y \in Y(C)$, evidence for $C$ will become evidence for $C \cap \{Y=y\}$. Since we want to model the two phenomena as independent, it is reasonable to ask that conditioning on $\{Y=y\}$ changes nothing about the outcomes of the first phenomenon that this individual piece of evidence is consistent with. So
\begin{equation}
\label{eq:projectionofevidence}
\forall y \in Y(C) \;\;\; X(C \cap \{Y = y\}) = X(C).
\end{equation}
It follows directly that (\ref{eq:projectionofevidence}) holds if and only if $C= X(C) \times Y(C)$. If $m(C)>0$ only if $C= X(C) \times Y(C)$, we say that $m$ concentrates on rectangles. Adding this constraint to the requirements of the first two approaches, makes all three approaches equivalent.

\begin{theorem}
\label{stellingonaf}
The following statements are equivalent:
\begin{enumerate}
\item[(1)] $m$ concentrates on rectangles and
\begin{equation}
\B_{Y \in B}(X \in A) = \B(X \in A)
\end{equation}
for all $A \subseteq \Omega_1$ and $B \subseteq \Omega_2$ with $\B(Y \in B^c) \not=1$,
\item[(2)] $m$ concentrates on rectangles and
\begin{equation}
\B(X \in A; Y \in B) = \B(X \in A)\B(Y \in B)
\end{equation}
for all  $A \subseteq \Omega_1$ and $B \subseteq \Omega_2$,
\item[(3)] we have
\begin{equation}
m(X \in A; Y \in B) = m_1(A)m_2(B)
\end{equation}
for all  $A \subseteq \Omega_1$ and $B \subseteq \Omega_2$.
\end{enumerate}
\end{theorem}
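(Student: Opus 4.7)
The plan is to prove the three-way equivalence by establishing $(3)\Leftrightarrow(2)$ and $(3)\Leftrightarrow(1)$. A preliminary observation is that (3) itself forces $m$ to concentrate on rectangles: summing $m(A\times B)=m_1(A)m_2(B)$ over nonempty $A\subseteq\Omega_1$ and $B\subseteq\Omega_2$ yields $(\sum_A m_1(A))(\sum_B m_2(B))=1$, and since $\sum_{C\neq\emptyset}m(C)=1$, no mass is left over for non-rectangular $C$. Thus the rectangle hypothesis appearing in (1) and (2) is automatic under (3).

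For $(3)\Leftrightarrow(2)$: assuming rectangle concentration, $\B(A\times B)=\sum_{A'\subseteq A,\,B'\subseteq B}m(A'\times B')$. Substituting (3) and splitting the double sum gives $\B_1(A)\B_2(B)$, which is (2). For the converse, apply the Möbius inversion lemma of Section \ref{sec:basic} in each coordinate of the product lattice $2^{\Omega_1}\times 2^{\Omega_2}$: this recovers $m(A\times B)=\sum_{A'\subseteq A,\,B'\subseteq B}(-1)^{|A|-|A'|+|B|-|B'|}\B_1(A')\B_2(B')=m_1(A)m_2(B)$.

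For $(3)\Rightarrow(1)$ I compute $\B_{Y\in B}(X\in A)$ directly from Definition \ref{def:conditionalm}. Since $m$ concentrates on rectangles and the $B'\subseteq\Omega_2$ satisfying $B'\cap B=B''\neq\emptyset$ are exactly the sets $B''\cup E$ with $E\subseteq B^c$, one finds
\begin{equation*}
\B_{Y\in B}(X\in A)=\frac{1}{1-\B_2(B^c)}\sum_{\emptyset\neq A''\subseteq A}\Bigl[m_1(A'')-\sum_{\emptyset\neq B'\subseteq B^c}m(A''\times B')\Bigr].
\end{equation*}
Substituting $m(A''\times B')=m_1(A'')m_2(B')$ reduces the bracketed term to $m_1(A'')(1-\B_2(B^c))$, and the whole expression collapses to $\B_1(A)$.

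The main obstacle is $(1)\Rightarrow(3)$, which requires a double Möbius inversion. Starting from the displayed formula without yet assuming (3), and setting it equal to $\B_1(A)$, I obtain $\sum_{A''\subseteq A}g_{B^c}(A'')=\B_1(A)\B_2(B^c)$, where $g_D(A'')=\sum_{\emptyset\neq B'\subseteq D}m(A''\times B')$. This identity extends automatically to any $B$ with $\B_2(B^c)=1$ (where the conditional is undefined), because then all $m_2$-mass lies in $B^c$ and hence $g_{B^c}=m_1$. One Möbius inversion in $A$ yields $g_D(A)=m_1(A)\B_2(D)$, that is, $\sum_{B'\subseteq D}m(A\times B')=m_1(A)\B_2(D)$ for every $D\subseteq\Omega_2$; a second Möbius inversion in the $Y$-coordinate then extracts $m(A\times B)=m_1(A)m_2(B)$. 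The delicate step is the combinatorial reorganization that recasts the conditional belief formula as a cumulative sum on which Möbius inversion can act, together with verifying that the boundary case $\B_2(B^c)=1$ does not obstruct the inversion.
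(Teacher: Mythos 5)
Your proof is correct, but it is organized differently from the paper's. The paper proves the cycle $(1)\Rightarrow(2)\Rightarrow(3)\Rightarrow(1)$: for $(1)\Rightarrow(2)$ it first observes that rectangle concentration makes the inclusion--exclusion inequality (B2) an equality on $\{X\in A\}\cup\{Y\in B\}$, and then feeds this into the closed-form conditioning identity $\B_H(\cdot)=(\B(\cdot\cup H^c)-\B(H^c))/(1-\B(H^c))$ with $H=\{Y\in B^c\}$ to extract the product formula; for $(2)\Rightarrow(3)$ it runs an induction on $|A|+|B|$; and its $(3)\Rightarrow(1)$ is essentially identical to yours (including the observation that $(3)$ forces concentration on rectangles because the rectangular masses already sum to $1$). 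You instead prove $(3)\Leftrightarrow(2)$ and $(3)\Leftrightarrow(1)$ separately, replacing the induction by M\"obius inversion on the product lattice $2^{\Omega_1}\times 2^{\Omega_2}$ (which is legitimate: under rectangle concentration $\B(A\times B)$ is exactly the cumulative sum of $m(A'\times B')$ over the product poset, and the product M\"obius function factors), and you prove $(1)\Rightarrow(3)$ directly by a double inversion without passing through $(2)$. I checked the delicate points: your identity $\sum_{A''\subseteq A}g_{B^c}(A'')=\B_1(A)\B_2(B^c)$ does follow from the conditional-belief formula under $(1)$, the boundary case $\B_2(B^c)=1$ is correctly handled (then $m(A''\times B')=0$ for all $B'\not\subseteq B^c$ by nonnegativity, so $g_{B^c}=m_1$), and the two successive inversions are justified because the identities hold for \emph{all} $A$ and all $D=B^c$. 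The trade-off: the paper's route is shorter (three implications, and the $(1)\Rightarrow(2)$ step is conceptually transparent, showing how the product rule for beliefs drops out of conditioning plus modularity on rectangles), while yours is more mechanical and uniform, makes the role of the inversion lemma from Section 2 explicit, and is arguably easier to verify line by line; it costs you four implications instead of three and a somewhat heavier combinatorial bookkeeping in $(1)\Rightarrow(3)$.
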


\begin{proof}
We prove (1)$\Rightarrow$(2), (2)$\Rightarrow$(3) and (3)$\Rightarrow$(1).

We start with (1)$\Rightarrow$(2), so assume (1) holds. First we note that in general it holds that $\B(A\cup B)= \B(A) + \B(B) - \B(A \cap B)$ if for every $C \subseteq A \cup B$ with $m(C)>0$ we have either $C \subseteq A$ or $C \subseteq B$. Because $m$ concentrates on rectangles, we have that for every $C \subseteq \{X \in A \vee Y \in B\}$ with $m(C)>0$ that $C \subseteq \{X \in A\}$ or $C \subseteq \{Y \in B\}$. So
\begin{equation}
\label{eq:vee}
\B(X \in A \vee Y \in B) = \B(X \in A) + \B(Y \in B) - \B(X \in A; Y \in B)
\end{equation}
for every $A \subseteq \Omega_1$ and $B \subseteq \Omega_2$.

Now let $A \subseteq \Omega_1$ and $B \subseteq \Omega_2$ such that $B(Y \in B) \not=1$. We find using respectively (1), (\ref{conditionalbelief}) and (\ref{eq:vee}), that
\begin{equation}
\begin{aligned}
\B(X \in A) & = \B_{Y \in B^c}(X \in A) \\
& = \frac{\B(X \in A \vee Y \in B) - \B(Y \in B) }{1 - \B(Y \in B)} \\
& = \frac{ \B(X \in A) - \B(X \in A; Y \in B) }{1 - \B(Y \in B)}.
\end{aligned}
\end{equation}
Rewriting this equation gives
\begin{equation}
\label{eq:statement2}
\B(X \in A; Y \in B) = \B(X \in A)\B(Y \in B).
\end{equation}
Now we note that (\ref{eq:statement2}) is trivially true for $A \subseteq \Omega_1$ and $B \subseteq \Omega_2$ with $\B(Y \in B)=1$, so (2) holds.

We continue with (2)$\Rightarrow$(3), so assume (2) holds. We prove (3) by induction on $|A|+|B|$. Clearly, if $|A|+|B|=0$ for $A \subseteq \Omega_1$ and $B \subseteq \Omega_2$, then $m(X \in A; Y \in B)= 0 =  m_1(A)m_2(B)$. Now suppose that $m(X \in A'; Y \in B')= m_1(A')m_2(B')$ for all  $A' \subseteq \Omega_1$ and $B' \subseteq \Omega_2$ with $|A'|+|B'|<n$. Let $A \subseteq \Omega_1$ and $B \subseteq \Omega_2$ be  such that $|A|+|B|=n$. Because of (2) we have
\begin{equation}
\label{eq:proof1}
\begin{aligned}
& m(X \in A; Y \in B) \\
& = \B(X \in A; Y \in B) - \sum_{ \substack{ A' \subseteq A, B' \subseteq B \\ |A'|+|B'|<n} } m(X \in A'; Y \in B') \\
& = \B(X \in A)\B(Y \in B) - \sum_{ \substack{ A' \subseteq A, B' \subseteq B \\ |A'|+|B'|<n} } m(X \in A'; Y \in B') \\
\end{aligned}
\end{equation}
Our induction hypothesis gives
\begin{equation}
\label{eq:proof2}
\begin{aligned}
& \sum_{ \substack{ A' \subseteq A, B' \subseteq B \\ |A'|+|B'|<n} } m(X \in A'; Y \in B') \\
& = \sum_{ \substack{ A' \subseteq A, B' \subseteq B \\ |A'|+|B'|<n} } m_1(A')m_2(B') \\
& = \sum_{B \subsetneq B'} m_1(A)m_2(B') + \sum_{A' \subsetneq A} \sum_{B' \subseteq B} m_1(A')m_2(B') \\
& = m_1(A)(\B(Y\in B)-m_2(B)) + \B(Y \in B)( \B(X \in A) - m_1(A) ) \\
& = - m_1(A)m_2(B) + \B(Y \in B)\B(X \in A). \\
\end{aligned}
\end{equation}
Combining (\ref{eq:proof1}) and (\ref{eq:proof2}) gives us $m(X \in A; Y\in B)=m_1(A)m_2(B)$. So (3) holds.

The only implication left to show is (3)$\Rightarrow$(1), so assume (3) holds. Because $m_1$ and $m_2$ are basic belief assignments on respectively $\Omega_1$ and $\Omega_2$ we find
\begin{equation}
\sum_{A \subseteq \Omega_1} \sum_{B \subseteq \Omega_2} m(X \in A; Y\in B) = \sum_{A \subseteq \Omega_1} m_1(A) \sum_{B \subseteq \Omega_2} m_2(B) = 1.
\end{equation}
This implies $m(C)=0$ if $C \not= \{X \in A; Y \in B\}$ for some $A \subseteq \Omega_1$ and $B \in \Omega_2$. Hence $m$ concentrates on rectangles. Further, for $A \subseteq \Omega_1$ and $B \subseteq \Omega_2$ such that $\B(Y \in B^c)\not=1$, we have
\begin{equation}
\begin{aligned}
\B_{Y \in B}(X \in A) & = \frac{ \sum_{A' \subseteq A} \sum_{B' \subseteq B} m(X \in A'; Y \in B')} { 1 - \sum_{A' \subseteq \Omega_1} \sum_{B' \subseteq B^c } m(X \in A'; Y \in B') } \\
& = \frac{ \sum_{A' \subseteq A} m_1(A') \sum_{B' \subseteq B} m_2(B')} { 1 - \sum_{A' \subseteq \Omega_1} m_1(A') \sum_{B' \subseteq B^c } m_2(B') } \\
& = \frac{ \sum_{A' \subseteq A} m_1(A') \sum_{B' \subseteq B} m_2(B')} { 1 - \sum_{B' \subseteq B^c } m_2(B') } \\
& = \frac{ \sum_{A' \subseteq A} m_1(A') \sum_{B' \subseteq B} m_2(B')} { \sum_{B' \subseteq B } m_2(B') } \\
& = \sum_{A' \subseteq A} m_1(A') = \B(X \in A).
\end{aligned}
\end{equation}
So (1) holds.
\end{proof}

We take the requirement of the third approach to be our actual definition of independence.

\begin{definition}
\label{def:independence}
The projections $X$ and $Y$ are independent if for all $A \subseteq \Omega_1$ and $B \subseteq \Omega_2$ we have
$$
m(X \in A; Y \in B)=m_1(A)m_2(B).
$$
\end{definition}

Our notion of independence generalizes the classical notion. In fact, in the classical situation $m$ concentrates on singletons, which of course are all rectangles and hence the three approaches of independence are the same in that case.

To see how we can interpret independence, we write the $P$ from (\ref{kansmaat}) for the probability distribution corresponding to $m$ on $\Omega_1 \times \Omega_2$ and we write $P_1$ and $P_2$ for the probability distributions corresponding to respectively $m_1$ and $m_2$. It follows directly from the definitions that 
\begin{equation}
\label{eq:episindep}
P( \{ \{X \in A; Y \in B\} \;:\; A \in \mathcal{A}, B \in \mathcal{B} \} ) = P_1(\mathcal{A})P_2(\mathcal{B})
\end{equation}
for all  $\mathcal{A} \subseteq 2^{\Omega_1}$ and $\mathcal{B} \subseteq 2^{\Omega_2}$, is equivalent with  Definition \ref{def:independence}. So (\ref{eq:episindep}) gives an interpretation of our notion of independence: for any $\mathcal{A} \subseteq 2^{\Omega_1}$ and $\mathcal{B} \subseteq 2^{\Omega_2}$,  the probability we have an outcome for $X$ in $\mathcal{A}$ and an outcome for $Y$ in $\mathcal{B}$ is the product of the individual probabilities.

Now that we have introduced independence, we revisit Example \ref{pasop}.

\begin{example}
Consider the situation in Example \ref{pasop} again. The basic belief assignment described there arises for instance when $X$ denotes the outcome of someone flipping a fair coin, and we have no information whatsoever about the way the outcome of $Y$ and how it is produced. Indeed, the marginal belief function of $X$ simply assigns mass $\frac12$ to both outcomes, whereas the marginal of $Y$ represents complete ignorance. 

It is easy to check that $X$ and $Y$ are independent. In fact, if we want the marginal belief functions of $X$ and $Y$ to be as given, $X$ and $Y$ are necessarily independent. This may sound strange, but notice that in our theory $Y$ is `degenerate', since its marginal belief function concentrates on $\{0,1\}$. In the classical theory, it is also the case that a degenerate random variable is independent of any other random variable on the same probability space.
\end{example}

\section{Rejection of Dempster's rule of combination}
\label{sec:dempster}

As we have mentioned in the introduction, we do not need Dempster's rule of combination. Nevertheless we want to spend some lines on it, because this rule has been the most important reason to reject the theory of belief function in the past. Now that we have developed our notions of conditioning and independence, we can explain why we think Dempster's rule deserves no place in our theory of belief functions.

We start by introducing the rule. Suppose $m_1$ and $m_2$ are two basic belief assignments on the same space $\Omega$. Dempster's rule of combination states that if $m_1$ and $m_2$ are `based on independent evidence', we can define a canonical basic belief assignment $m_1 \oplus m_2$ on $\Omega$ `combining' $m_1$ and $m_2$, and which is given by
\begin{equation}
\label{eq:introdempster}
(m_1 \oplus m_2)(A) = \frac{ \sum_{B,C | B \cap C = A} m_1(B)m_2(C) }{ 1 - \sum_{B,C | B \cap C = \emptyset} m_1(B)m_2(C)  }.
\end{equation}

It is easy to check that $m_1 \oplus m_2$ is indeed a basic belief assignment on $\Omega$, but we think  this basic belief assignment is in general not in any way a meaningful `combination' of  $m_1$ and $m_2$. To see why, we first use our own theory of belief functions to interpret (\ref{eq:introdempster}). We define a basic belief assignment $m$ on the product space $\Omega^2$ by treating $m_1$ and $m_2$ as basic belief assignments corresponding to independent phenomena, using Definition \ref{def:independence}, i.e.
\begin{equation}
m(A \times B) := m_1(A)m_2(B)
\end{equation}
for $A,B \subseteq \Omega$.
Let $H := \{ (\omega,\omega) \;:\; \omega \in \Omega \} \subseteq \Omega^2$, which is the set for which the outcomes are identical. Then for nonempty $A \subseteq \Omega$ we find
\begin{equation}
\label{eq:dempster}
\begin{aligned}
m_H(\{ (\omega,\omega) \;:\; \omega \in A \}) & = \frac{ \sum_{D \cap H = \{ (\omega,\omega) \;:\; \omega \in A \}} m(D) }{ 1 - \sum_{D \cap H = \emptyset} m(D) } \\
& = \frac{ \sum_{D_1,D_2 \;|\; D_1 \cap D_2 = A} m_1(D_1)m_2(D_2) }{ 1 - \sum_{D_1,D_2 \;|\; D_1 \cap D_2 = \emptyset} m_1(D_1)m_2(D_2) } \\
& = (m_1 \oplus m_2)(A).
\end{aligned}
\end{equation}

Equation \ref{eq:dempster} shows that if $m_1$ and $m_2$ describe independent phenomena, $m_1 \oplus m_2$ is the basic belief assignment after we learned that they had the same outcome. This is curious because $m_1$ and $m_2$ are concerning the \emph{same} phenomenon, which is the very opposite of $m_1$ and $m_2$ describing independent phenomena. We think that this is the heart of the problem of Dempster's rule: it does some computation treating $m_1$ and $m_2$ as corresponding to independent phenomena, while the description claims it does some computation treating $m_1$ and $m_2$ as corresponding to the same phenomenon. The confusion between `independent evidence' and independent phenomena may find its origin in the problematic nature of the notion of `independent evidence': if evidence concerns the same phenomenon, then automatically the all evidence is dependent because all the evidence depends on the true outcome of the phenomenon. Whatever one precisely means with `independent evidence', however, it is clear that it should not be the same as evidence concerning independent phenomena. This leads to absurd results, like the following example illustrates.

\begin{example}
Suppose we are going to flip a coin and set $\Omega=\{h,t\}$ for the outcomes, where $h$ stands for head and $t$ for tail. Previous flips of the coin have given us the information that head comes up 60\% of times, hence $m_1(\{h\})=\frac{3}{5}$ and $m_1(\{t\})=\frac{2}{5}$. A second investigation based on the shape of the coin leads to the same conclusion, so $m_2=m_1$. Then combining these two basic belief assignments using Dempster's rule gives
\begin{equation}
(m_1 \oplus m_2)(\{h\}) = \frac{\left(\frac{3}{5}\right)^2} {\left(\frac{3}{5}\right)^2 + \left(\frac{2}{5}\right)^2}  = \frac{9}{13} \approx 0.69
\end{equation}
and
\begin{equation}
(m_1 \oplus m_2)(\{t\}) = \frac{\left(\frac{2}{5}\right)^2} {\left(\frac{3}{5}\right)^2 + \left(\frac{2}{5}\right)^2}  = \frac{4}{13} \approx 0.31.
\end{equation}
Hence Dempster's rule leads to the conclusion that the coin is much more biased than both of our two sources of evidence agreed on. Although one could try to argue that confirmation could lead to more belief, it should at least not lead to \emph{less} belief. This is of course exactly what happens with our belief in the outcome `tail': while $m_1(\{t\}=m_2(\{t\}=\frac{2}{5}$, we have $(m_1 \oplus m_2)(\{t\}) = \frac{4}{13} < \frac{2}{5}$. From this it follows that the only acceptable outcome of `combining' $m_1$ and $m_2=m_1$ would be $m_1$ again and not $m_1 \oplus m_2$.
\end{example}

We reject Dempster's rule and do not see any need for a rule that `combines' basic belief assignments on the same space. Since the theory of belief functions is a generalization of probability theory, we think it makes sense to generalize concepts like conditioning and independence. Probability theory, however, does not have a canonical rule for `combining' probability distributions on the same outcome space into a new canonical probability distribution. Both probability theory and our theory of belief functions, as applied to forensic cases in our companion article 
\cite{kerkmee15} seem to function fine without such a rule. Furthermore, the fact that in the well studied case of probability distributions such a rule is not known, gives us reason to be skeptical about being able to construct a plausible rule for the general case.

\section{Examples}
\label{sec:examples}

We start with an example from the forensic setting, and show how our theory can avoid the problem of having to choose a prior, producing a less arbitrary result. This is a typical case in which we think our theory should be used, and we once more refer to our companion paper \cite{kerkmee15} for further forensic examples. After that, we also discuss a more traditional gambling example.

\subsection{A forensic example}
\label{subsec:forensic}

Suppose we know that one of two persons, say $A$ and $B$, has committed a certain crime. We write $E$ for the event that the person that committed the crime and $A$ both have a certain (DNA) characteristic that occurs in the total population with probability $p$. Notice that, before we know $E$, we do not know that the person that committed the crime has the characteristic. We write $G$ for the guilt of $A$. We are interested in $G$ given $E$.

The typical way to deal with this classically, is to use Bayes' rule that states
\begin{equation}
\label{bayes}
P(G|E) = \frac{P(E|G)P(G)}{P(E|G)P(G) + P(E|G^c)P(G^c)}.
\end{equation}
Whatever the interpretation of the classical probability measure $P$, subjective, frequentistic or otherwise, $P(G|E)$ represents the posterior probability of guilt conditioned on the available evidence, while $P(G)$ denotes the prior probability of guilt, before taking the evidence $E$ into account. From our information, we can conclude that $P(E|G)=p$ and $P(E|G^c)=p^2$. There is, however, no reason to assign any positive prior probability to either $G$ or $G^c$. But since classical probability requires that $P(G) + P(G^c)=1$, by lack of a better alternative, a uniform prior $P(G)=P(G^c)=\frac{1}{2}$ is usually taken, which leads to
\begin{equation}
\label{eq:islandklas}
P(G|E) = \frac{ \frac{1}{2}p} {\frac{1}{2}p^2 + \frac{1}{2}p} = \frac{1}{1+p}.
\end{equation}

We can re-derive the answer in (\ref{eq:islandklas}) in our setting, using the following basic belief assigment based on a uniform prior on the guilt of $A$:
\begin{equation}
\begin{aligned}
m(E \cap G) = & m \left( \; \begin{tabular}{ | l | c | c | c |}
    \hline
 & $G^c$ & $G$ \\ \hline
$E^c$ &    &     \\ \hline
$E$ &     & $\ast$         \\ \hline
\end{tabular}  \; \right) = \frac{1}{2}p, \\
m(E^c \cap G) = & m \left( \; \begin{tabular}{ | l | c | c | c |}
    \hline
 & $G^c$ & $G$ \\ \hline
$E^c$ &    & $\ast$    \\ \hline
$E$ &    &      \\ \hline
\end{tabular} \; \right) = \frac{1}{2}(1-p),  \\
m(E \cap G^c) = & m \left( \;  \begin{tabular}{ | l | c | c | c |}
    \hline
 & $G^c$ & $G$ \\ \hline
$E^c$ &     &     \\ \hline
$E$ &  $\ast$  &          \\ \hline
\end{tabular} \; \right) = \frac{1}{2}p^2,  \\
m(E^c \cap G^c) = & m \left( \;  \begin{tabular}{ | l | c | c | c |}
    \hline
 & $G^c$ & $G$ \\ \hline
$E^c$ &  $\ast$   &     \\ \hline
$E$ &    &          \\ \hline
\end{tabular} \; \right) = \frac{1}{2}(1-p^2).  \\
\end{aligned}
\end{equation}
Then we find that
\begin{equation}
\B_E(G) = \frac{ \frac{1}{2}p} {\frac{1}{2}p^2 + \frac{1}{2}p} = \frac{1}{1+p}.
\end{equation}
In our theory the problem of choosing a prior can be resolved, since belief functions are more flexible than probability distributions. We will explain this now.

What we do, is determine what we actually \emph{know} in certain scenarios. If $A$ does not have the characteristic, which happens with probability $1-p$, then we know $E^c$ but we do not know anything about the guilt of $A$. If $A$ and $B$ both have the characteristic, which happens with probability $p^2$, then we know $E$ but again we do not know anything about the guilt of $A$. If $A$ has the characteristic and $B$ has not, which happens with probability $p(1-p)$, we know that either $E^c$ or  $G$. This leads to the following basic belief assignment: 
\begin{equation}
\begin{aligned}
m(E^c) = & m \left( \; \begin{tabular}{ | l | c | c | c |}
    \hline
 & $G^c$ & $G$ \\ \hline
$E^c$ &  $\ast$   & $\ast$    \\ \hline
$E$ &    &      \\ \hline
\end{tabular} \; \right) = 1-p,  \\
m(E) = & m \left( \; \begin{tabular}{ | l | c | c | c |}
    \hline
 & $G^c$ & $G$ \\ \hline
$E^c$ &    &     \\ \hline
$E$ &   $\ast$  & $\ast$         \\ \hline
\end{tabular}  \; \right) =  p^2, \\
m(E^c \cup G) = & m \left( \;  \begin{tabular}{ | l | c | c | c |}
    \hline
 & $G^c$ & $G$ \\ \hline
$E^c$ &  $\ast$   & $\ast$    \\ \hline
$E$ &    & $\ast$         \\ \hline
\end{tabular} \; \right) = p(1-p).  \\
\end{aligned}
\end{equation}
This basic belief assignments does not give any prior belief on the guilt of $A$ since $\B(G)=\B(G^c)=0$. If we condition on $E$, however, we can simply compute, using our rule of conditioning, that
\begin{equation}
\B_{E}(G) = \frac{p(1-p)}{p(1-p)+p^2} = 1-p,
\end{equation}
which is a smaller number than the classical answer.

\subsection{A gambling example}
\label{examplegambling}

In a casino, two croupiers execute the following procedure independently of each other. First, a fair coin is flipped. After that, the croupiers get, with probability $p$, the opportunity to change the outcome of the coin flip, again independently of each other. How in such a situation the croupiers make their decisions about changing or not, is unknown by the player. After the two results are produced, but before the player knows the outcomes, the player is told whether or not the produced outcomes of the two croupiers are the same. This means that of the four possible combinations of outcomes, only two remain, and these are the two outcomes on which the player can bet.

We write $\Omega_1=\{h,t\}$ for the outcome space of the first croupier, where $h$ stands for `head' and $t$ for `tail', and we define the basic belief assignment $m_1:2^{\Omega_1} \rightarrow [0,1]$ by  $m_1(\{ h \}) = m_1(\{ t \}) = \frac{1}{2}(1-p)$ and $m_1(\{h,t\})=p$. We write $\Omega_2=\{h,t\}$ for the outcome space of the second croupier and set $m_2: 2^{\Omega_2} \rightarrow [0,1]$ similar to $m_1$. On $\Omega=\{h,t\}^2$, using our definition of independence, we get $m: 2^\Omega \rightarrow [0,1]$ given by
\begin{equation}
\begin{aligned}
& m \left( \; \begin{tabular}{ | l | c | c | c |}
    \hline
 & $h$ & $t$ \\ \hline
$h$ &  $\ast$   &     \\ \hline
$t$ &  &       \\ \hline
\end{tabular} \; \right) & = 
m \left( \; \begin{tabular}{ | l | c | c | c |}
    \hline
 & $h$ & $t$ \\ \hline
$h$ &     &     \\ \hline
$t$ & $\ast$ &       \\ \hline
\end{tabular} \; \right) \\
= & \; m \left( \; \begin{tabular}{ | l | c | c | c |}
    \hline
 & $h$ & $t$ \\ \hline
$h$ &     & $\ast$    \\ \hline
$t$ &  &       \\ \hline
\end{tabular} \; \right) & = 
m \left( \; \begin{tabular}{ | l | c | c | c |}
    \hline
 & $h$ & $t$ \\ \hline
$h$ &    &     \\ \hline
$t$ &  &  $\ast$      \\ \hline
\end{tabular} \; \right) \\
= & \; \frac{1}{4}(1-p)^2, \\
\end{aligned}
\end{equation}
\begin{equation}
\begin{aligned}
& m \left( \; \begin{tabular}{ | l | c | c | c |}
    \hline
 & $h$ & $t$ \\ \hline
$h$ &  $\ast$   &     \\ \hline
$t$ &  $\ast$ &       \\ \hline
\end{tabular} \; \right) & = 
m \left( \; \begin{tabular}{ | l | c | c | c |}
    \hline
 & $h$ & $t$ \\ \hline
$h$ &     &     \\ \hline
$t$ & $\ast$ &  $\ast$     \\ \hline
\end{tabular} \; \right) \\
= & \; m \left( \; \begin{tabular}{ | l | c | c | c |}
    \hline
 & $h$ & $t$ \\ \hline
$h$ &  $\ast$   & $\ast$    \\ \hline
$t$ &  &       \\ \hline
\end{tabular} \; \right) & = 
m \left( \; \begin{tabular}{ | l | c | c | c |}
    \hline
 & $h$ & $t$ \\ \hline
$h$ &    &  $\ast$   \\ \hline
$t$ &  &  $\ast$      \\ \hline
\end{tabular} \; \right) \\
= & \; \frac{1}{2}p(1-p), \\
\end{aligned}
\end{equation}
and
\begin{equation}
\begin{aligned}
& m \left( \; \begin{tabular}{ | l | c | c | c |}
    \hline
 & $h$ & $t$ \\ \hline
$h$ &  $\ast$   & $\ast$    \\ \hline
$t$ &  $\ast$ &  $\ast$     \\ \hline
\end{tabular} \; \right) = p^2.
\end{aligned}
\end{equation}
\medskip
We write $S=\{ (h,h), (t,t) \}$ for the event that the outcomes of the two croupiers are the same. Using our rule of conditioning, we compute our belief in $(h,h)$ in case we get the information that the results were the same:
\begin{equation}
\label{eq:theanswer}
\begin{aligned}
\B_S(\{(h,h)\}) = \frac{\frac{1}{4}(1-p)^2 + p(1-p)}{1- \frac{1}{2}(1-p)^2}.
\end{aligned}
\end{equation}
Obviously, $\B_S(\{(t,t)\})$ has the same value, and 
\begin{equation}
\B_S(\{(t,t)\}) + \B_S(\{(h,h)\}) = 1 - m_S(S) = 1 - p^2.
\end{equation}
The quantity $p^2$ can be seen as the price we have to pay for our uncertainty about the decisions of the croupiers. Notice that if $p=0$, we simply have two fair coin flips and (\ref{eq:theanswer}) equals $\frac{1}{2}$. If $p=1$, we are completely ignorant and (\ref{eq:theanswer}) equals $0$.

We now investigate how we can model this game with classical probability distributions. In that case we have to make the assumption that both croupiers make their decision according to some probability distribution.  This means that there are probabilities $p_1,p_2$ that respectively the first and second result (after possible changes of the croupiers) are head. The definition of the game implies that
\begin{equation}
\label{eq:clasint}
p_1, p_2 \in \left[ \frac{1}{2}(1-p), \frac{1}{2}(1+p) \right].
\end{equation}
Now we set $P: 2^\Omega \rightarrow [0,1]$ by $P(\{(h,h)\})=p_1p_2$, $P(\{(h,t)\})=p_1(1-p_2)$, $P(\{(t,h)\})=(1-p_1)p_2$ and $P(\{(t,t)\})=(1-p_1)(1-p_2)$. It follows that
\begin{equation}
\label{eq:theanswerclas}
P(\{(h,h)\}|S) = \frac{p_1p_2}{p_1p_2 + (1-p_1)(1-p_2)},
\end{equation}
and an easy computation shows that (\ref{eq:theanswerclas}) is contained in the interval
\begin{equation}
\label{interval}
\left[\frac{\frac12(1-p)^2}{1+p^2}, \frac{\frac12(1+p)^2}{1+p^2}\right].
\end{equation}
Note that our answer in (\ref{eq:theanswer}) is contained in this interval.

To understand the difference between (\ref{eq:theanswer}) and (\ref{eq:theanswerclas}), note that the two approaches treat conditioning fundamentally different. In a classical setting, one first needs to choose and fix $p_1$ and $p_2$ before the concept of conditioning even makes sense. In our approach with belief functions, however, we treat the uncertainty about the decisions of the croupiers on the same level as our other uncertainty, making it possible to condition without making any assumptions about the decisions of the croupiers first. 

We can make this global assessment more concrete by looking at an example. Suppose that we are in the classical situation, and that the croupiers try to get tail. That is, if first head comes up and they get the opportunity to change, then they choose tail. If tails comes up, they never change. This boils down to $p_1=p_2=\frac12(1-p)$ and corresponds to the left endpoint of the interval in (\ref{interval}).
 
Now consider the event that the first croupier flips a head and does not get the chance to revise the outcome, and the second croupier does get the chance to revise the outcome, an event with probability $\frac{1}{2}p(1-p)$. 
In the classical setting which we described, this event implies that the second croupier chooses a tail and hence the outcome $(h,t)$ is not in $S$. The mass assigned to this event, hence, only plays a role in the normalizing factor when we condition on $S$. 

In the theory of belief functions however, the conditioning works fundamentally different. Considering the same event as described above, the probability mass of this event does not end up in the normalizing factor, but is instead added to the final belief in $(h,h)$, because given $S$, it is implied that the choice of the second croupier was head.

It is an interesting question as to which answer one would choose in a real betting situation. The conditional probability of $(h,h)$ given $S$ can be safely said to be at least the left endpoint of the interval in 
(\ref{interval}), and  perhaps this is the only number someone analyzing the situation clasically, wants to use if you can bet only once. In the theory of belief functions however, one would choose for the answer in (\ref{eq:theanswer}) in case of a unique betting situation. Of course, when we repeat the betting experiment many times, one might get insight in the strategy of the croupiers, and this might be a reason to use the classical theory with appropriate values of $p_1$ and $p_2$. 

\section{The relation between belief functions and $\cal{P}_{\B}$}
\label{subsec:pbel}

Let $m: 2^\Omega \rightarrow [0,1]$ be a basic belief assignment. Let $\mathcal{P}_\B$ be the collection of probability distributions (already introduced in Section \ref{sec:basic}) on $\Omega$ that we can obtain by distributing for every $C \subseteq \Omega$ a probability mass of $m(C)$ over the elements of $C$. If $A \subseteq \Omega$, then for every $C \subseteq \Omega$ with $C \setminus A \not= \emptyset$, we can assign a probability mass of $m(C)$ to an element outside $A$. Thus we have
\begin{equation}
\label{eq:minprob2}
\inf\{  P(A) \;:\; P \in \mathcal{P}_\B \} = \sum_{C \subseteq A} m(C) = \B(A).
\end{equation}
This leads to an interpretation of belief as `minimum probability'. As we already showed in Example  \ref{ex:conditioning2}, there are $\B$ and $A,H \subseteq \Omega$ such that
\begin{equation}
\label{eq:condalt}
\B_H(A) \not = \inf \{ P(A|H) \;:\; P \in \mathcal{P}_\B \}.
\end{equation}
This makes our theory distinct from a `lower probability' theory of belief functions, see e.g.\ \cite{shafer81}. Lemma \ref{compatibel} shows how we can properly express $\B_H$ in terms of $\cal{P}_\B$.

\begin{lemma}
\label{compatibel}
$\B_H(A)=\inf \left\{ P(A|H)  \;:\; P \in \mathcal{P}_\B \; \mathrm{and} \;  P(H^c) = \B(H^c) \right\}.$
\end{lemma}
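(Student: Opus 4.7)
The plan is to exploit the explicit formula (\ref{conditionalbelief}) for $\B_H(A)$ and to analyze directly what the side constraint $P(H^c)=\B(H^c)$ does to the distributions in $\mathcal{P}_\B$. Write any $P\in\mathcal{P}_\B$ as coming from an allocation $(q_C(\omega))_{\omega\in C}$ with $q_C(\omega)\geq 0$ and $\sum_{\omega\in C}q_C(\omega)=m(C)$, so that $P(\{\omega\})=\sum_{C\ni\omega}q_C(\omega)$. Since $P(H^c)=\sum_{C}\sum_{\omega\in C\cap H^c}q_C(\omega)\geq \sum_{C\subseteq H^c}m(C)=\B(H^c)$, the constraint $P(H^c)=\B(H^c)$ forces $q_C(\omega)=0$ for every $\omega\in C\cap H^c$ whenever $C\cap H\neq\emptyset$; in other words, the mass $m(C)$ of every $C$ meeting $H$ must be distributed entirely over $C\cap H$.

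On the feasible set so described, $P(H)=1-\B(H^c)$ is constant, so minimizing $P(A\mid H)$ reduces to minimizing $P(A\cap H)$. For a fixed $C$ with $C\cap H\neq\emptyset$, the contribution of $m(C)$ to $P(A\cap H)$ is $\sum_{\omega\in C\cap H\cap A}q_C(\omega)$, which we can drive down to $0$ precisely when $C\cap H\not\subseteq A$ (place all of $m(C)$ on some $\omega\in C\cap H\cap A^c$), and which necessarily equals $m(C)$ otherwise. The $C$'s with $C\subseteq H^c$ contribute $0$ regardless. Hence
\[
\inf\bigl\{P(A\cap H):P\in\mathcal{P}_\B,\ P(H^c)=\B(H^c)\bigr\}
=\sum_{\emptyset\neq C\cap H\subseteq A} m(C).
\]

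The key point I would then check is that these pointwise-optimal choices can be made simultaneously by one valid allocation: for each $C$ with $C\cap H\neq\emptyset$, choose one element $\omega_C\in C\cap H\cap A^c$ if this set is nonempty, otherwise any $\omega_C\in C\cap H$, and put $q_C(\omega_C)=m(C)$; for $C\subseteq H^c$ put all of $m(C)$ on any fixed $\omega_C\in C$. This yields a legitimate $P\in\mathcal{P}_\B$ with $P(H^c)=\B(H^c)$ and realizing the infimum, so the infimum is attained.

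Dividing by $P(H)=1-\B(H^c)$ gives
\[
\inf\bigl\{P(A\mid H):P\in\mathcal{P}_\B,\ P(H^c)=\B(H^c)\bigr\}
=\frac{\sum_{\emptyset\neq C\cap H\subseteq A} m(C)}{1-\B(H^c)},
\]
which by the second line of the calculation in (\ref{conditionalbelief}) is exactly $\B_H(A)$. The only subtle step is the attainability argument in the previous paragraph; the rest is bookkeeping using the definition of $\mathcal{P}_\B$ and the formula for $\B_H$.
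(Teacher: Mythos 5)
Your proof is correct, but it takes a different route from the paper's. The paper works top--down: it starts from the closed form $\B_H(A)=\bigl(\B(A\cup H^c)-\B(H^c)\bigr)/\bigl(1-\B(H^c)\bigr)$, replaces each belief by $\inf_{P\in\mathcal{P}_\B}P(\cdot)$ using (\ref{identify}), and then passes from a ratio of infima to the infimum of the ratio over the subfamily $\{P\in\mathcal{P}_\B: P(H^c)=\B(H^c)\}$. You instead work bottom--up with explicit mass allocations $q_C$: you characterize the constrained subfamily as exactly those allocations that place all of $m(C)$ on $C\cap H$ whenever $C\cap H\neq\emptyset$, compute $\inf P(A\cap H)=\sum_{\emptyset\neq C\cap H\subseteq A}m(C)$ over that set, exhibit an allocation attaining it, and match against the intermediate line of (\ref{conditionalbelief}). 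The two arguments meet in the middle, but yours has a concrete advantage: the paper's step from the second to the third line of its display silently uses the fact that the infima of $P(A\cup H^c)$ and of $P(H^c)$ are attained simultaneously (equivalently, that restricting to $P(H^c)=\B(H^c)$ does not raise $\inf P(A\cup H^c)$ above $\B(A\cup H^c)$), and your explicit construction of the optimal allocation is precisely the justification of that step. The paper's version is shorter and stays entirely at the level of belief-function identities; yours is longer but self-contained and makes the attainability issue, which you rightly flag as the only subtle point, fully explicit.
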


\begin{proof}
\begin{equation}
\label{eq:conditionalbeliefminimalfrequency}
\begin{aligned}
\B_H(A) & =  \frac{ \B(A \cup H^c) - \B(H^c)}{1-\B(H^c)} \\
 & =  \frac{ \inf \{ P(A \cup H^c) \;:\; P \in \mathcal{P}_\B \} - \inf \{ P(H^c) \;:\; P \in \mathcal{P}_\B \} }{1- \inf \{ P(H^c) \;:\; P \in \mathcal{P}_\B \}} \\
 & = \inf \left\{ \frac{P(A \cup H^c) - P(H^c)} {1-P(H^c)}  \;:\; P \in \mathcal{P}_\B \; \mathrm{and} \;  P(H^c) = \B(H^c) \right\} \\
  & = \inf \left\{ \frac{P(A \cap H)} {P(H)}  \;:\; P \in \mathcal{P}_\B \; \mathrm{and} \;  P(H^c) = \B(H^c) \right\} \\
  & = \inf \left\{ P(A|H)  \;:\; P \in \mathcal{P}_\B \; \mathrm{and} \;  P(H^c) = \B(H^c) \right\}.
\end{aligned}
\end{equation}
\end{proof}

Lemma \ref{compatibel} tells us that conditional belief is not the infimum over \emph{all} conditional probabilities in $\cal{P}_\B$, but only over a sub-collection of $\cal{P}_\B$. Notice that this implies that $\B_H(A) \geq \inf \{ P(A|H) \;:\; P \in \mathcal{P}_\B \}$. By only considering $P \in \mathcal{P}_\B$ with $P(H^c)=\B(H^c)$ in (\ref{eq:conditionalbeliefminimalfrequency}), we are discarding distributions of $\mathcal{P}_\B$ on the basis that we have learned $H$. This means that in our theory, the collection $\cal{P}_\B$ should be interpreted as a collection from which we can discard distributions if we have reasons to do so. In particular, this means that we can \emph{not} interpret $\cal{P}_\B$ as containing the `correct' or `actual' probability distribution, without knowing which one it is. This is because if we interpret $\cal{P}_\B$ that way, we are not allowed to discard any element of $\cal{P}_\B$, since by discarding a distribution we might discard the actual distribution.

We conclude by expressing independence in terms of $\mathcal{P}_\B$. Consider the situation of Section \ref{sec:independence} again and observe that
\begin{equation}
\begin{aligned}
& \inf\{ P(X \in A; Y \in B) \;:\; P \in \mathcal{P}_\B \}  \\
  = & \inf \{ P_1(A)P_2(B) \;:\; P_1 \in \mathcal{P}_{\B_1}, \; P_2 \in \mathcal{P}_{\B_2} \}
\end{aligned}
\end{equation}
for all  $\mathcal{A} \subseteq \Omega_1$ and $\mathcal{B} \subseteq \Omega_2$, is equivalent with the requirement of the second approach. 

\section{A law of large numbers}
\label{subsec:frequency}

Since we only have developed our theory for finite outcome spaces, we present a `weak' law of large numbers. Let $X: \Omega \rightarrow \mathbb{R}$, $m: 2^\Omega \rightarrow [0,1]$ be a basic belief assignment and $\B$ the corresponding belief function. To state our theorem, we need to generalize the concept `expectation' to our setting. There are various ways to generalize the concept, but because we aim at proving a law of large numbers, we want to define the expectation of $X$ such that it is a `guaranteed lower bound' of the average of many independent `copies' of $X$. This leads to the following definition.

\begin{definition}
\label{def:expectation}
The \emph{expectation} of $X$ (with respect to $m$) is
\begin{equation}
\E(X) := \sum_{C \subseteq \Omega} m(C) \min_{\omega \in C} X(\omega).
\end{equation}
\end{definition}

In case $\B=P$ is a probability distribution, we have 
\begin{equation}
\E(X) = \sum_{\omega \in \Omega} m(\{\omega\}) X(\omega) = \sum_{\omega \in \Omega} P(\{\omega\}) X(\omega) = E(X),
\end{equation}
and thus Definition \ref{def:expectation} is consistent with the concept of expectation for probability distributions.

First, we want to show that in the long run, $\E(A)$ is an lower bound for the average of $n$ independent `copies' of $X$. Secondly, we want to show that there is no bigger lower bound than $\E(A)$. We make this precise. On $\Omega^n$ we use Definition \ref{def:independence} to define the basic belief assignment $m_n: 2^{\Omega^n} \rightarrow [0,1]$ as
\begin{equation}
m_n(A_1 \times \cdots \times A_n) := \prod_{j=1}^n m(A_j),
\end{equation} 
making all projections independent. Let $\B_n$ be the corresponding belief function. We set $X_j: \Omega^n \rightarrow \mathbb{R}$ by
\begin{equation}
X_j( (\omega_1,\omega_2,\ldots,\omega_n)) := X(\omega_j).
\end{equation}

\begin{theorem}
\label{thm:largenumbers}
For every $\epsilon>0$ we have
\begin{equation}
\lim_{n \rightarrow \infty} \B_n \left( \frac{1}{n} \sum_{j=1}^n X_j \geq \E(X) - \epsilon  \right) = 1
\end{equation}
and
\begin{equation}
\lim_{n \rightarrow \infty} \B_n \left( \frac{1}{n} \sum_{j=1}^n X_j \geq \E(X) + \epsilon  \right) = 0.
\end{equation}
\end{theorem}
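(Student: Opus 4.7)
The strategy is to reduce the statement to the classical weak law of large numbers by using the interpretation of $m$ as a probability distribution on $2^\Omega$, already introduced around equation (\ref{kansmaat}). Define $P$ on $2^\Omega$ by $P(\{C\})=m(C)$, and introduce the auxiliary real-valued function $X^\ast: 2^\Omega\to\mathbb{R}$ given by $X^\ast(C):=\min_{\omega\in C}X(\omega)$. Then Definition \ref{def:expectation} says exactly that the classical expectation of $X^\ast$ under $P$ equals $\E(X)$. Independent copies in our sense correspond, via (\ref{eq:episindep}), to the classical product $P^n$ on $(2^\Omega)^n$, so I will work on that probability space and let $X_j^\ast(C_1,\ldots,C_n):=X^\ast(C_j)$ be classically i.i.d.\ random variables with mean $\E(X)$.

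The core step is to rewrite both beliefs as ordinary probabilities in this classical picture. Since $m_n$ is supported on product sets, for any $A\subseteq \Omega^n$ we have
\begin{equation}
\B_n(A)=\sum_{C_1\times\cdots\times C_n\subseteq A}\prod_{j=1}^n m(C_j)=P^n\bigl(\{(C_1,\ldots,C_n)\,:\,C_1\times\cdots\times C_n\subseteq A\}\bigr).
\end{equation}
For $A=\{\frac{1}{n}\sum_j X_j\geq c\}$, the inclusion $C_1\times\cdots\times C_n\subseteq A$ is equivalent to
\begin{equation}
\min_{(\omega_1,\ldots,\omega_n)\in C_1\times\cdots\times C_n}\frac{1}{n}\sum_{j=1}^n X(\omega_j)\;\geq\; c,
\end{equation}
which, because the minimum distributes over the independent coordinates, is the same as $\frac{1}{n}\sum_{j=1}^n X^\ast(C_j)\geq c$. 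Thus for $c=\E(X)\pm\epsilon$,
\begin{equation}
\B_n\Bigl(\tfrac{1}{n}\sum X_j\geq c\Bigr)=P^n\Bigl(\tfrac{1}{n}\sum X_j^\ast\geq c\Bigr).
\end{equation}

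Once this identification is in place, the two limits are immediate from the classical weak law of large numbers applied to the i.i.d.\ sequence $X_j^\ast$ with mean $\E(X)$: the first probability tends to $1$ and the second to $0$. The only genuine subtlety—and the step I expect to dwell on—is the ``minimum distributes over products'' observation, i.e.\ verifying carefully that $C_1\times\cdots\times C_n\subseteq\{\frac{1}{n}\sum X_j\geq c\}$ iff $\frac{1}{n}\sum\min_{\omega\in C_j}X(\omega)\geq c$. This uses the fact that each coordinate varies independently within its own $C_j$, so an infimum of a separable sum splits as a sum of infima; everything else is a bookkeeping exercise and an appeal to the classical law. No tightness issues arise because $\Omega$, and hence the range of $X^\ast$, is finite and therefore bounded.
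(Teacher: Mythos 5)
Your proposal is correct and follows essentially the same route as the paper's own proof: pass to the classical probability distribution $P$ on $2^\Omega$ with $P(\{C\})=m(C)$, replace $X$ by the coordinate-wise minimum variable (your $X^\ast$, the paper's $\hat{X}$), identify $\B_n$ of the event with the product-measure probability of the corresponding event for the i.i.d.\ minima, and invoke the classical weak law of large numbers. The ``minimum distributes over the rectangle'' step you flag is exactly the observation the paper makes when it writes $\min_{\omega_j\in C_j}\frac{1}{n}\sum_j X(\omega_j)=\frac{1}{n}\sum_j \hat{X}(C_j)$.
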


\begin{proof}
Let $\epsilon>0$ be given. We define the probability distribution $P$ by $P(\{C\}):=m(C)$ and $P_n$ by
\begin{equation}
P_n(\{ (C_1,C_2,\ldots, C_n)\}) := \prod_{j=1}^n m(C_j) = \prod_{j=1}^n P(\{C_j\}).
\end{equation}
We define the random variable $\hat{X}:2^\Omega \rightarrow \mathbb{R}$ by
\begin{equation}
\hat{X}(C) := \min_{\omega \in C} X(\omega)
\end{equation}
and let $\hat{X_j}:(2^{\Omega})^n \rightarrow \mathbb{R}$ be given by
\begin{equation}
\hat{X_j}((C_1,C_2,\ldots, C_n)) := \hat{X}(C_j).
\end{equation}
Observe that for any $\alpha \in [0,1]$ we have
\begin{equation}
\begin{aligned}
& \B_n \left( \frac{1}{n} \sum_{j=1}^n X_j \geq \alpha \right) \\
& = P_n \left(\left\{ (C_1,C_2,\ldots,C_n) \;:\; \min_{ \omega_j \in C_j } \frac{1}{n} \sum_{j=1}^n X(\omega_j) \geq \alpha \right\} \right) \\
& = P_n \left(\left\{ (C_1,C_2,\ldots,C_n) \;:\; \frac{1}{n} \sum_{j=1}^n \hat{X}(C_j) \geq \alpha \right\} \right) \\
& = P_n \left( \frac{1}{n} \sum_{j=1}^n \hat{X}_j \geq \alpha \right). \\
\end{aligned}
\end{equation}
The (classical) expectation of the $\hat{X}_j$ is
\begin{equation}
E(\hat{X}_j) = E(\hat{X}) = \sum_{C \subseteq \Omega} P(\{C\}) \hat{X}(C) = \sum_{C \subseteq \Omega} m(C) \min_{\omega \in C} X(\omega) = \E(X)
\end{equation}
and by the definition of $P_n$ all the $\hat{X}_1,\ldots,\hat{X}_n$ are (classically) independent. With the classical weak law of large numbers, we then find that
\begin{equation}
\begin{aligned}
& \lim_{n \rightarrow \infty} \B_n \left( \frac{1}{n} \sum_{j=1}^n X_j \geq \E(X)-\epsilon \right) \\
= & \lim_{n \rightarrow \infty} P_n \left( \frac{1}{n} \sum_{j=1}^n \hat{X}_j \geq E(\hat{X})-\epsilon \right) = 1
\end{aligned}
\end{equation}
and
\begin{equation}
\begin{aligned}
& \lim_{n \rightarrow \infty} \B_n \left( \frac{1}{n} \sum_{j=1}^n X_j \geq \E(X)+\epsilon \right) \\
=& \lim_{n \rightarrow \infty} P_n \left( \frac{1}{n} \sum_{j=1}^n \hat{X}_j \geq E(\hat{X})+\epsilon \right) = 0.
 \end{aligned}
\end{equation}
\end{proof}

If we take for $A \subseteq \Omega$ the random variable $1_A$, then we find
\begin{equation}
\E(1_A) = \sum_{C \subseteq \Omega} m(C) \min_{\omega \in C} 1_A(\omega) = \sum_{C \subseteq A} m(C) = \B(A).
\end{equation}
This gives us a special case of Theorem \ref{thm:largenumbers}.
\begin{lemma}[Corollary of Theorem \ref{thm:largenumbers}]
\label{lem:largenumbers}
For every $\epsilon>0$ and every $A \subseteq \Omega$ we have
\begin{equation}
\lim_{n \rightarrow \infty} \B_n \left( \frac{1}{n} \sum_{j=1}^n 1_A(\omega_j) \geq \B(A) - \epsilon  \right) = 1
\end{equation}
and
\begin{equation}
\lim_{n \rightarrow \infty} \B_n \left( \frac{1}{n} \sum_{j=1}^n 1_A(\omega_j) \geq \B(A) + \epsilon  \right) = 0.
\end{equation}
\end{lemma}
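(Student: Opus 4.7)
The plan is to derive the lemma directly from Theorem \ref{thm:largenumbers} by specializing to the indicator random variable $X = 1_A$. The only substantive observation I need is that in the expectation formula from Definition \ref{def:expectation}, an indicator attains the value $1$ uniformly on a set $C$ if and only if $C \subseteq A$, so that the minimum selects precisely the subsets of $A$.

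First I would set $X = 1_A$ viewed as a function $\Omega \to \mathbb{R}$. Then I would compute
\begin{equation*}
\E(1_A) = \sum_{C \subseteq \Omega} m(C) \min_{\omega \in C} 1_A(\omega).
\end{equation*}
Since $\min_{\omega \in C} 1_A(\omega) = 1$ when $C \subseteq A$ and equals $0$ otherwise (any $\omega \in C \setminus A$ forces the minimum to $0$), this sum collapses to $\sum_{C \subseteq A} m(C) = \B(A)$. Under the same independent product construction used in Theorem \ref{thm:largenumbers}, the induced random variables $X_j$ on $\Omega^n$ are exactly the indicators $1_A(\omega_j)$, so no further identification is needed.

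Applying Theorem \ref{thm:largenumbers} with this choice of $X$ and with $\E(X) = \B(A)$ then yields both limit statements of the lemma verbatim.

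There is essentially no obstacle here: the result is a one-line specialization of Theorem \ref{thm:largenumbers}, and the only place where care is warranted is the computation of the minimum of $1_A$ on subsets $C$, which is transparent. No separate argument is needed for the second limit, since it is obtained from Theorem \ref{thm:largenumbers} by the same substitution.
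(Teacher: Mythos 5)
Your proposal is correct and is essentially identical to the paper's own argument: the paper likewise computes $\E(1_A) = \sum_{C \subseteq A} m(C) = \B(A)$ by noting that $\min_{\omega \in C} 1_A(\omega) = 1$ exactly when $C \subseteq A$, and then invokes Theorem \ref{thm:largenumbers} with $X = 1_A$. Nothing is missing.
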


Lemma \ref{lem:largenumbers} tells us that if we write $F_n(A) \in [0,1]$ for the relative frequency of $A$ occurring after $n$ independent repetitions, then the belief $\B(A)$ is the greatest lower bound for $F_n(A)$ we can give if $n$ is large. Note, however, that this is \emph{not} the same as knowing that for every large $k$ there is a $n>k$ such that $F_n(A)$ is close to $\B(A)$. 

Lemma \ref{lem:largenumbers} provides a frequency interpretation of belief function which is analogous to the interpretation of the classical law of large numbers for classical probability theory. It gives a mathematical formulation of the intuitive idea that when we independently repeat an experiment many times, the relative frequency of the number of occurrence of an event $A$ should be related to the belief in $A$. The fact that the belief in $A$ is related to the greatest lower bound for $F_n(A)$ we can give based on our knowledge, and not to a limit of $F_n(A)$, reflects the difference between probabilities en belief functions. This difference, we say once more, is the difference between on the one hand quantifying what $F_n(A)$ {\em is}, and on the other hand quantifying what we {\em know} about $F_n(A)$.

The extent to which the law of large numbers is useful depends very much on the situation at hand. Note that we need independent repetitions of the same experiment, and in many applications in, say, legal or forensic settings, such independent repetitions do not make much sense. Nevertheless, even in these cases, it might be reassuring that under a hypothetical assumption of independent repetitions, there is a law of large numbers which reflects the nature of belief functions quite well. Section \ref{examplegambling} contains an example which is repeatable.  

\section{A betting interpretation}
\label{subsec:betting}

In a certain interpretation of probability distributions \cite{ramsey31,fin37}, the probability on a set $A$ is seen as the price an agent is willing to buy and sell a bet for that pays out $1$ if $A$ turns out to be true. Given the constraint that agents cannot assign prices in such a way that they can have a guaranteed loss (a Dutch Book), the Dutch Book Theorem tells us that probability distributions are exactly the functions that obey the Kolmogorov axioms. Here we want to give a similar interpretation for belief functions and derive a theorem similar to the Dutch Book Theorem. The crux here, is that we do not look at the price an agent is willing to buy and sell for, but only the maximum price an agent is willing to buy for. We make that idea precise.

We consider the following scenario. An agent assigns to every subset of $S \subseteq \Omega$ the maximum price $P(S) \in [0,1]$ she is willing to pay for the bet that pays out $1$ if $S$ turns out to be true. First, we look at the following theorem that gives us the constraints corresponding to probability distributions.

\begin{theorem}
\label{thm:bettingprobability}
A function $P: 2^\Omega \rightarrow [0,1]$ is a probability distribution if and only if
\begin{itemize}
\item[(P1)] $P(\Omega)=1$
\item[(P2)] For all $A_1,A_2,...,A_N \subseteq \Omega$ and $B_1,B_2,...,B_M \subseteq \Omega$ such that
\begin{equation}
\label{eq:P2condition}
\forall \omega \in \Omega \;\; \sum_{i=1}^N 1_{A_i}(\omega) \; \geq \; \sum_{j=1}^M  1_{B_j}(\omega),
\end{equation}
we have
\begin{equation}
\sum_{i=1}^N P(A_i) \geq \sum_{j=1}^M P(B_j).
\end{equation}
\end{itemize}
\end{theorem}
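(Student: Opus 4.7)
The plan is to prove both implications directly from the definition of a probability distribution on the finite set $\Omega$, namely that $P(A)=\sum_{\omega\in A}p(\omega)$ for some weights $p(\omega)\geq 0$ summing to $1$. The forward direction will be a one-line integration argument, and the reverse direction will use (P2) applied to carefully chosen families of sets to recover additivity and non-negativity of the singleton masses.

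For the forward direction, assume $P$ comes from weights $p(\omega)\geq 0$ with $\sum_\omega p(\omega)=1$. Then (P1) is immediate, and for (P2) I would multiply the pointwise inequality in (\ref{eq:P2condition}) by $p(\omega)$ and sum over $\omega\in\Omega$, swapping the order of summation on both sides to obtain
\begin{equation}
\sum_{i=1}^N P(A_i)=\sum_\omega p(\omega)\sum_{i=1}^N 1_{A_i}(\omega)\geq \sum_\omega p(\omega)\sum_{j=1}^M 1_{B_j}(\omega)=\sum_{j=1}^M P(B_j).
\end{equation}

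For the reverse direction, assume (P1) and (P2). The key observation is that whenever $\sum_i 1_{A_i}=\sum_j 1_{B_j}$ pointwise, applying (P2) twice (in both directions) yields $\sum_i P(A_i)=\sum_j P(B_j)$. First I would apply this to the identity $1_{A\cup B}=1_A+1_B$ valid for disjoint $A,B$, to conclude finite additivity $P(A\cup B)=P(A)+P(B)$. Next, taking the empty family on the left and $B_1=\emptyset$ on the right in (P2) (where $1_\emptyset\equiv 0$) gives $0\geq P(\emptyset)$, hence $P(\emptyset)=0$. Finally I would define $p(\omega):=P(\{\omega\})$, which lies in $[0,1]$ by hypothesis, and use additivity to get $P(A)=\sum_{\omega\in A}p(\omega)$ for every $A\subseteq\Omega$; the normalization $\sum_\omega p(\omega)=P(\Omega)=1$ follows from (P1).

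I do not anticipate any serious obstacle: the heart of the argument is simply that (P2) with both inequality directions encodes exact linearity under pointwise identities of indicator sums, which is precisely additivity in disguise. The only subtle point is the handling of empty families (the case $N=0$ in (P2)) to obtain $P(\emptyset)=0$, and one should also verify that $p(\omega)\geq 0$ follows from $P$ taking values in $[0,1]$ rather than needing a separate appeal to (P2); this is immediate from the codomain specified in the statement.
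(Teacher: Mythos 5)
Your proposal is correct and follows essentially the same route as the paper's proof: the forward direction by summing the pointwise indicator inequality against the weights $p(\omega)$, and the reverse direction by applying (P2) in both directions to the identity $1_A+1_B=1_{A\cup B}$ for disjoint $A,B$ to extract finite additivity. The extra details you supply (handling $P(\emptyset)$ and reconstructing $p(\omega)=P(\{\omega\})$) are exactly what the paper compresses into the remark that it suffices to show (P2) is equivalent to finite additivity, so there is no substantive difference.
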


\begin{proof}
It is sufficient to show that (P2) is equivalent with finite additivity. First suppose (P2). Let $A,B \subseteq \Omega$ be disjoint. We have
\begin{equation}
\forall \omega \in \Omega \;\; 1_{A}(\omega) + 1_{B}(\omega) = 1_{A \cup B}(\omega),
\end{equation}
so by (P2) we find both $P(A) + P(B) \geq P(A \cup B)$ and $P(A \cup B) \geq P(A) + P(B)$. So $P$ is  finitely additive.

Now suppose that $P$ is finitely additive. Let $A_1,A_2,...,A_N \subseteq \Omega$ and $B_1,B_2,...,B_M \subseteq \Omega$ be such that (\ref{eq:P2condition}) holds. Then
\begin{equation}
\begin{aligned}
\sum_{i=1}^N P(A_i) & = \sum_{\omega \in \Omega} P(\{\omega\}) \sum_{i=1}^N 1_{A_i}(\omega)  \\ 
 & \geq \sum_{\omega \in \Omega} P(\{\omega\}) \sum_{j=1}^M 1_{B_i}(\omega)  \\
  & = \sum_{j=1}^M P(B_j). \\
\end{aligned}
\end{equation}
So (P2) holds.
\end{proof}

Constraint (P1) says that an agent always pays $1$ for bets on tautologies, i.e. $P(\Omega)=1$. Constraint (P2) says that an agent must assign prices in such a way that if she buys a set of bets that is guaranteed to pay out at least as much as another set of bets, that the total price for the first set must be as least as much as the total price of the second set of bets. 

Given our interpretation of the maximum price an agent is willing to pay for a bet, however, we think (P2) is too restrictive as illustrated by the following example. Let $\Omega=\{\omega_0,\omega_1\}$ and consider an agent that is completely ignorant about how likely $\omega_0$ or $\omega_1$ is. Of course she will be ready to pay $1$ for a bet on $\Omega$, since payout is guaranteed. But she could feel conservative in her ignorance and not be ready to pay $\emph{anything}$ for a bet on $\{\omega_0\}$ or $\{\omega_1\}$. However, if $P(\{\omega_0\})=P(\{\omega_1\})=0$ while $P(\Omega)=1$, then (P2) is violated. 

The problem is that (P2) only compares actual payout under realizations $\omega \in \Omega$. Our example shows that an agent may also be interested in guaranteed payout of a bet on $A$ if she only knows that the actual outcome is an a given set $S$. This is in line with the epistemic interpretation which we discussed earlier, since in this epistemic interpretation, a subset $S \subseteq \Omega$ corresponds to knowledge about the outcome being in $S$ without further specification.

Hence we suggest to change (\ref{eq:P2condition}) into
\begin{equation}
\forall S \subseteq \Omega \;\; \sum_{i=1}^N 1(S \subseteq A_i) \; \geq \; \sum_{j=1}^M  1(S \subseteq B_j).
\end{equation}
We then force the total price of the first set of bets to be at least the total price of the second set if not only the payout is at least as big in all cases, but also the guaranteed payout under any $S$ is at least as big in all cases. The following theorem states that if we make this change, we get a characterization of belief functions.

\begin{theorem}
\label{thm:bettingbelief}
A function $\B: 2^\Omega \rightarrow [0,1]$ is a belief function if and only if
\begin{itemize}
\item[(B1)] $\B(\Omega)=1$
\item[(B2*)] For all $A_1,A_2,\ldots, A_N \subseteq \Omega$ and $B_1,B_2,\ldots, B_M \subseteq \Omega$ such that
\begin{equation}
\label{eq:condB2*}
\forall S \subseteq \Omega \;\; \sum_{i=1}^N 1(S \subseteq A_i) \; \geq \; \sum_{j=1}^M 1(S \subseteq B_j),
\end{equation}
we have
\begin{equation}
\sum_{i=1}^N \B(A_i) \geq \sum_{j=1}^M \B(B_j).
\end{equation}
\end{itemize}
\end{theorem}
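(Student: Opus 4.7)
The plan is to prove both directions by connecting (B2*) to the inclusion--exclusion characterization already established in Theorem~\ref{thm:beliefalternate}.

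First I would handle the forward implication. Suppose $\B$ is a belief function with corresponding basic belief assignment $m$. Interchanging the order of summation,
\[
\sum_{i=1}^N \B(A_i) \;=\; \sum_{i=1}^N \sum_{C \subseteq A_i} m(C) \;=\; \sum_{C \subseteq \Omega} m(C) \sum_{i=1}^N 1(C \subseteq A_i),
\]
and analogously for the $B_j$'s. Applying the hypothesis (\ref{eq:condB2*}) with $S = C$ and using $m(C) \geq 0$ gives $\sum_i \B(A_i) \geq \sum_j \B(B_j)$, which is (B2*). Condition (B1) is immediate from $\sum_C m(C) = 1$.

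For the converse, assume (B1) and (B2*); I would reduce (B2*) to the pairwise inequality (B2) of Theorem~\ref{thm:beliefalternate}. Given arbitrary $A, B \subseteq \Omega$, apply (B2*) with $\{A_1, A_2\} = \{A \cup B, A \cap B\}$ and $\{B_1, B_2\} = \{A, B\}$. The required hypothesis
\[
1(S \subseteq A \cup B) + 1(S \subseteq A \cap B) \;\geq\; 1(S \subseteq A) + 1(S \subseteq B)
\]
is verified by a short case split on whether $S \subseteq A$ and/or $S \subseteq B$: in every case the left-hand side minus the right-hand side lies in $\{0,1\}$. Consequently $\B(A \cup B) + \B(A \cap B) \geq \B(A) + \B(B)$, which is (B2). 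Together with (B1), Theorem~\ref{thm:beliefalternate} yields that $\B$ is a belief function.

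The main obstacle is simply spotting the right pair of families for the reduction; once one realizes that the subset-based comparison in (B2*) is rich enough to recover inclusion--exclusion by pitting $\{A \cup B, A \cap B\}$ against $\{A, B\}$, both directions reduce to routine bookkeeping, and no tools beyond Definition~\ref{def:belief} and Theorem~\ref{thm:beliefalternate} are needed.
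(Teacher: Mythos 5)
Your proposal is correct and follows essentially the same route as the paper: the forward direction interchanges the sums to write $\sum_i \B(A_i) = \sum_{C} m(C)\sum_i 1(C \subseteq A_i)$ and invokes the hypothesis with $S=C$, and the converse specializes (B2*) to the families $\{A\cup B, A\cap B\}$ versus $\{A,B\}$ to recover (B2) and then applies Theorem~\ref{thm:beliefalternate}. There is nothing to add.
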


\begin{proof}
Suppose (B1) and (B2*) hold. Let $A,B \subseteq \Omega$. For all $S \subseteq \Omega$ we have
\begin{equation}
1(S \subseteq A \cup B) + 1(S \subseteq A \cap B) \geq 1(S \subseteq A) + 1(S \subseteq B).
\end{equation}
So by (B2*) we find
\begin{equation}
\B(A \cup B) + \B(A \cap B) \geq \B(A) + \B(B).
\end{equation}
So by Theorem \ref{thm:beliefalternate} $\B$ is a belief function.

Now suppose $\B$ is a belief function. Then (B1) is immediate and we have to show (B2*). Let $m: 2^\Omega \rightarrow [0,1]$ be the corresponding basic belief assignment of $\B$. Let $A_1,A_2,\ldots, A_N \subseteq \Omega$ and $B_1,B_2,\ldots, B_M \subseteq \Omega$ be such that (\ref{eq:condB2*}) holds. Then
\begin{equation}
\begin{aligned}
\sum_{i=1}^N \B(A_i) & = \sum_{S \subseteq \Omega} m(S) \sum_{i=1}^N 1(S \subseteq A_i)  \\ 
 & \geq \sum_{S \subseteq \Omega} m(S) \sum_{j=1}^M 1(S \subseteq B_j)  \\
  & = \sum_{j=1}^M \B(B_j). \\
\end{aligned}
\end{equation}
So (B2*) holds.
\end{proof}

Theorem \ref{thm:bettingbelief} tells us that an agent following the relaxed constraints, is  precisely an agent assigning $\B(A)$ as a maximum price she is willing to pay for a bet on $A$ (that payouts out $1$ if $A$ is true), for some belief function $\B$. This gives us our betting interpretation of belief functions.

\end{document}